\documentclass[oneside]{tac}

\title{Modeling Stable One-Types}
\author{Niles Johnson, Ang\'elica M. Osorno}
\copyrightyear{2012}
\address{%
N.J.\ 
Department of Mathematics,
The Ohio State University, Newark\\
Newark, OH 43055
USA\\
{\tt johnson.5320@osu.edu}\\[1pc]
A.O.\ 
Department of Mathematics,
University of Chicago\\
Chicago, IL 60637
USA\\
{\tt aosorno@math.uchicago.edu}\\
}

\keywords{stable homotopy one-type, Picard groupoid}
\amsclass{18B40, 18D10, 55P42, 55S45}
\date{19 July, 2012}




\usepackage[all,cmtip]{xy}
\xyoption{curve}      

\usepackage{amsmath,amssymb}


\usepackage{etoolbox}
\providebool{extrarefs}
\setbool{extrarefs}{true}  

\providebool{bibsortlabels}
\setbool{bibsortlabels}{true} 

\DeclareFontFamily{OMS}{rsfs}{\skewchar\font'60}
\DeclareFontShape{OMS}{rsfs}{m}{n}{<-5>rsfs5 <5-7>rsfs7 <7->rsfs10 }{}
\DeclareSymbolFont{rsfs}{OMS}{rsfs}{m}{n}
\DeclareSymbolFontAlphabet{\scr}{rsfs}
\usepackage{mathrsfs}

\newcommand{\un}[1]{\underline{#1}}

\newcommand{\sC}{\scr{C}}
\newcommand{\sD}{\scr{D}}
\newcommand{\sE}{\scr{E}}
\newcommand{\sF}{\scr{F}}
\newcommand{\sG}{\scr{G}}
\newcommand{\sH}{\scr{H}}
\newcommand{\sO}{\scr{O}}
\newcommand{\sS}{\scr{S}}
\newcommand{\sT}{\scr{T}}

\newcommand{\bN}{\mathbb{N}}
\newcommand{\bS}{\mathbb{S}}
\newcommand{\bZ}{\mathbb{Z}}

\newcommand{\al}{\alpha}
\newcommand{\epz}{\varepsilon}
\newcommand{\De}{\Delta}


\newcommand{\fto}{\xrightarrow}
\newcommand{\fot}{\xleftarrow}
\newcommand{\cn}{\colon}

\DeclareMathOperator{\Ker}{\mathrm{Ker}}
\DeclareMathOperator{\Coker}{\mathrm{Coker}}
\DeclareMathOperator{\CCoker}{\mathbb{C}\mathrm{oker}}


\DeclareFontFamily{OT1}{pzc}{}
\DeclareFontShape{OT1}{pzc}{m}{it}{<-> s * [1.140] pzcmi7t}{}  
\DeclareMathAlphabet{\mathpzc}{OT1}{pzc}{m}{it}

\newcommand{\gpd}{\mathpzc{Gpd}}
\newcommand{\Top}{\mathpzc{Top}}
\newcommand{\cat}{\mathpzc{Cat}}
\newcommand{\sympic}{\mathpzc{Pic}}
\newcommand{\stab}{\sS ^1_0}
\newcommand{\Ho}[1]{\mathpzc{Ho}(#1)}





\newcommand{\xym}{\xymatrix}

\newcommand{\hty}{\simeq}
\newcommand{\iso}{\cong}




\hyphenation{un-derived}
\hyphenation{mon-oid-al}
\hyphenation{group-oid}
\hyphenation{group-oids}

\usepackage[T1]{fontenc}

\usepackage[short,nodayofweek]{datetime}

\usepackage{xcolor}

\usepackage[unicode=true, pdfusetitle,
 bookmarks=true,bookmarksnumbered=false,
 breaklinks=false,
 backref=false,
 colorlinks=true,
 linkcolor=blue!70!black,
 citecolor=black,
 urlcolor=blue!78!red,
 final
]{hyperref}
\hypersetup{pdfkeywords={stable one-type, symmetric monoidal bicategory}}

\usepackage{bookmark} 


\usepackage[capitalise]{cleveref} 


\crefname{lem}{Lemma}{Lemmas}
\crefname{thm}{Theorem}{Theorems}
\crefname{defn}{Definition}{Definitions}
\crefname{prop}{Proposition}{Propositions}
\crefname{rmk}{Remark}{Remarks}
\crefname{cor}{Corollary}{Corollaries}





\theoremstyle{plain}

\newtheorem{thm}{Theorem}

\makeatletter
\def\xnstate#1{\subsection*{\sc#1}%
\begingroup\em\@ifnextchar[{\@credit}{}}

\def\xnnewtheorem#1{\@ifnextchar[{\@xnnewthm{#1}}{\@xnnewthm{#1}[]}}

\def\@xnnewthm#1[#2]#3{\@ifnextchar[{\@@xnnewthm{#1}[#2]{#3}}{\@@xnnewthm{#1}[#2]{#3}[ ]}}

\def\@@xnnewthm#1[#2]#3[#4]{\expandafter\def\csname #1\endcsname{%
\xnstate{#3.}\@ifnextchar[{\@credit}{}}%
 \expandafter\let\csname
     end#1\endcsname\endstate}

\makeatother

\xnnewtheorem{thmA}{Theorem A}
\xnnewtheorem{thmB}{Theorem B}
\xnnewtheorem{thmC}{Theorem C}

\newtheorem{cor}[subsection]{Corollary}

\newtheorem{prop}[subsection]{Proposition}

\newtheorem{lem}[subsection]{Lemma}



\newtheoremrm{defn}[subsection]{Definition}












\newtheoremrm{rmk}[subsection]{Remark}




\numberwithin{equation}{section}







\begin{document}


\maketitle

\begin{abstract}
  Classification of homotopy $n$-types has focused on developing
  algebraic categories which are equivalent to categories of
  $n$-types.  We expand this theory by providing algebraic models of
  homotopy-theoretic constructions for stable one-types.  These
  include a model for the Postnikov one-truncation of the sphere
  spectrum, and for its action on the model of a stable one-type.  We
  show that a bicategorical cokernel introduced by Vitale models the
  cofiber of a map between stable one-types, and apply this to develop
  an algebraic model for the Postnikov data of a stable one-type.
\end{abstract}

\section*{Introduction}

The homotopy category of groupoids is equivalent to the homotopy
category of unstable one-types, via the classifying space and
fundamental groupoid functors. This is one of the well-known results
from a large body of work around the ``algebraic homotopy'' outlined
by J.H.C.~Whitehead in his 1950 address to the International Congress
of Mathematicians.  Crossed modules classify unstable two-types, and
Conduch\'e gives a genaeralization to unstable three-types
\cite{conduche84}.

A related body of work focuses on stable homotopy type.  Stable
one-types are classified by Picard groupoids, i.e., group-like
symmetric monoidal groupoids.  This is a well-known result for which
we give a new proof in Section~\ref {sec:stable-one-types}. Picard groupoids
were first introduced in the thesis of S\'inh Hoang Xuan
\cite{sinh1975}, where the author gives a thorough algebraic
classification theorem.  Since then, various results have further
established the link between Picard groupoids and stable one types.
Garz\'on and Miranda \cite{GM97Homotopy} develop a model structure for
the categories of Picard groupoids, identifying the path and cylinder
constructions therein. They use this setting to model homotopy classes
of maps between spaces with nontrivial homotopy groups in degrees $n$
and $n+1$ for $n \ge 1$. Garz\'on-Miranda-del R\'io
\cite{GMdR02Tensor} give categorical models for the $n^{\text{th}}$
homotopy groupoid of a space for $n \ge 2$, showing that the resulting
monoidal categories are braided for $n = 2$ and symmetric for $n \ge
3$. As a generalization of Eilenberg-Mac Lane cohomology,
Bullejos-Carrasco-Cegarra \cite{BCC93Cohomology} define a cohomology
of simplicial sets with coefficients in a Picard groupoid.  Their work
uses this to give alternate categorical models for spaces with homotopy
groups in degrees $n$ and $n+1$ for $n \ge 3$---the stable range.

Our proof that Picard groupoids classify stable one-types is given in
Theorem~\ref {thm:stable-1-type-symm-picard-equiv} using the perspective of
$E_\infty$ action on the categorical and topological objects.  Our
main results go beyond the basic classification to describe the
homotopical structure of stable one-types through corresponding
structure of Picard groupoids.

Specifically, we study the decomposition of stable one-types by their
Postnikov data.  This consists of abelian groups $\pi_0$ and $\pi_1$,
and a single $k$-invariant, which is a map of Eilenberg-Mac Lane
spectra $H\pi_0 \to \Sigma ^2 H\pi_1$. Using the isomorphism
$[H\pi_0,\Sigma ^2 H\pi_1]\cong Hom(\pi_0 /2\pi_0,\pi_1)$
\cite[(2.7)]{EM54groups}, this $k$-invariant can be identified with
the quadratic map $\eta ^{\ast} : \pi_0 \to \pi _1$, induced by
precomposition with the Hopf map $\eta : S^3 \to S^2$ \cite[\S
  8]{BM2008shtpy}.  Our main results model the Postnikov data and
sphere action on a stable one-type directly in terms of Picard
groupoid data.  Note, in particular, that the target of the Postnikov
invariant is a stable two-type (of a special kind).  Hence our
algebraic models lead naturally toward models for stable two-types.


Our main results are as follows.  For Picard groupoids $\sC$ and $\sD$
and a  symmetric monoidal functor $F : \sC \to \sD$, we describe a symmetric
monoidal bicategory $\Coker(F)$ first introduced by Vitale
\cite{vitale2002pbe}.  In Section~\ref {sec:cokernels} we apply a long exact
sequence argument to prove the following result as
Theorem~\ref {thm:cok-sym-mon} and~Corollaries~\ref {cor:coker-and-hocofib} and~ \ref {cor:Postnikov-invars}.
\begin{thmA}
  Let $F: \sC \to \sD$ be a functor of Picard groupoids.
  Then there is a bigroupoid $\Coker(F)$ and a
  natural pseudofunctor
  \[
  C_F \cn \sD \to \Coker(F)
  \]
  which models the stable cofiber in the following sense:
  \begin{enumerate}
  \item $\Coker(F)$ is symmetric monoidal and $C_F$ is a symmetric monoidal pseudofunctor.
  \item Taking classifying spaces yields a cofibration sequence of grouplike
    $E_\infty$ spaces:
    \[
    B\sC \to B\sD \to B\Coker(F).
    \]
  \item When $\sD = \sC_0$ is the discrete category of isomorphism classes of
    objects in $\sC$ and $F = \al_0$ is the induced monoidal functor,
    we have an equivalence with the Postnikov tower of $B\sC$:
    \[\xym{
      B\sC \ar[r] \ar@{=}[d] 
      & K(\pi_0, 0) \ar[r] \ar[d]^-{\hty} 
      & K(\pi_1, 2) \ar[d]^-{\hty} \\
      B\sC \ar[r]^-{B\al_0}
      & B\sC_0 \ar[r]^-{BC_{\alpha_0}} 
      & B\Coker(\al_0) 
    }\]
  \end{enumerate}
\end{thmA}
Our work uses a strictification result which is somewhat stronger than
one could expect for general symmetric monoidal categories, and it may
be of independent interest.  This is Theorem~\ref {prop:skel-strictification}:
\begin{thmB}
  Every Picard groupoid is equivalent as a symmetric monoidal category
  to one which is both skeletal and permutative.
\end{thmB}

Our approach also reveals that the action of the truncated sphere
spectrum on a stable one-type is present in the algebraic model. This
is hinted at in the unstable literature
\cite{BCC93Cohomology,GMdR02Tensor} but not described explicitly.  We
prove the following as
Propositions~\ref {prop:free-symm-Pic-cat},  \ref {prop:model-trunc-sphere} and~ \ref {prop:model-eta-action}.
\begin{samepage}
  \begin{thmC} There is a Picard groupoid $\bS$ which models the
    one-type of the sphere spectrum in the following sense:
    \begin{enumerate}
    \item The Picard groupoid $\bS$ is the free Picard
      groupoid on one object.
    \item The classifying space $B\bS$ is the Postnikov
      1-truncation of $QS^0$.
    \item Let $\sC$ be a Picard groupoid.  There is a natural action
      of $\bS$ on $\sC$ such that the induced action of $B\bS$ on
      $B\sC$ is equivalent to the action of the truncated
      sphere spectrum on $B\sC$.
    \end{enumerate}
  \end{thmC}
\end{samepage}

This work is a proving ground for a larger project joint with J.P.~May
which models stable two-types via symmetric monoidal bicategories.
The top Postnikov invariant in that case lands in a stable 3-type,
which should be modeled by a symmetric monoidal tricategory; one
purpose of our program is to use this approach as leverage to
understand symmetric monoidal structure on higher weak $n$-categories.

\subsection*{Acknowledgements.} The authors wish to thank Peter May for
helpful conversations and Nick Gurski for suggesting the proof of
Proposition~\ref {prop:free-symm-Pic-cat}.  They are also grateful for
the suggestions of an anonymous referee.

\section{Stable one-types}\label{sec:stable-one-types}

Let $\sS$ denote any symmetric monoidal model category of spectra. We
denote by $\stab$ the full subcategory of $\sS$ whose objects are
spectra with all homotopy groups equal to zero except at levels 0 and
1. The objects of this category are called \emph{stable one-types}. A
map between stable one-types is a stable equivalence if it induces
isomorphisms of homotopy groups.

\begin{defn}
  Let $(\sC, \oplus, I)$ be a symmetric monoidal category. An object
  $x$ is \emph{invertible} if there exists an object $y$ and an
  isomorphism
  \[
  \epz \cn y \oplus x \to I.
  \]
  If such a $y$ exists it is unique up to isomorphism.  When one
  exists, we will sometimes use $x^*$ to denote a specified inverse of
  $x$.
\end{defn}

\begin{defn}
  A \emph{Picard groupoid} $\sC$ is a symmetric monoidal groupoid such
  that every object is invertible.  The isomorphism classes of objects
  form an abelian group denoted $\pi_0 \sC$, and the endomorphisms of
  the identity object $I$ form an abelian group denoted $\pi_1\sC$.
\end{defn}

\begin{defn}
  The category $\sympic$ has as objects the Picard groupoids and as
  morphisms strong symmetric monoidal functors.  For symmetric
  monoidal functors into groupoids, the notions of lax and strong
  monoidal coincide.  Throughout the paper our monoidal functors
  are assumed to be strong monoidal.  A symmetric monoidal functor is
  a \emph{weak equivalence} if it is an equivalence of the underlying
  categories.
\end{defn}

\begin{rmk}
  The term \emph{Picard category} is used in some literature for what
  we call a Picard groupoid.  Although some will interpret our
  terminology as redundant, we hope others will find it maximally
  comprehensible.
\end{rmk}

\ifbool{extrarefs}{
The classification of stable one-types by Picard groupoids appears
explicitly and implicitly in various parts of the literature.  For
example, Patel \cite[\S 5]{patel2008} shows that there is an
equivalence between the homotopy categories of stable one-types and
Picard groupoids, making precise a ``vauge idea'' of Drinfeld \cite[\S
5.5]{Dri06Infinite}.  This is also sketched by Hopkins-Singer in
\cite[\S B]{HS02Quadratic} and Ganter-Kapranov in \cite[\S
3]{GK11Symmetric}.  An equivalent result of Bullejos-Carrasco-Cegarra
appears in \cite[\S5]{BCC93Cohomology}, where the authors prove that
the homotopy category of spaces with nontrivial homotopy groups
$\pi_n$ and $\pi_{n+1}$, $n \ge 3$, is equivalent to the homotopy
category of Picard groupoids.  We give another proof of this result
based on compatibility of the fundamental groupoid and classifying
space functors with $E_\infty$ actions.
}{
The classification of stable one-types by Picard groupoids appears
explicitly and implicitly in various parts of the literature.  For
example, Patel \cite[\S 5]{patel2008} shows that there is an
equivalence between the homotopy categories of stable one-types and
Picard groupoids, and this is also sketched by Hopkins-Singer in
\cite[\S B]{HS02Quadratic}.  An equivalent result of
Bullejos-Carrasco-Cegarra appears in \cite[\S5]{BCC93Cohomology},
where the authors prove that the homotopy category of spaces with
nontrivial homotopy groups $\pi_n$ and $\pi_{n+1}$, $n \ge 3$, is
equivalent to the homotopy category of Picard groupoids.  We give
another proof of this result based on compatibility of the fundamental
groupoid and classifying space functors with $E_\infty$ actions.
}

\begin{thm}\label{thm:stable-1-type-symm-picard-equiv}
  There is an equivalence between the categories $\Ho{\stab}$ and
  $\Ho{\sympic}$ induced by the fundamental groupoid and classifying
  space functors.
\end{thm}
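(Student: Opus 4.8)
The plan is to factor the asserted equivalence through the classical correspondence between groupoids and unstable one-types recalled in the introduction, and to upgrade that correspondence so as to record the symmetric monoidal --- equivalently, $E_\infty$ --- structure present on both sides. First I would take as given Whitehead's equivalence: the fundamental groupoid functor $\Pi_1$ and the classifying space functor $B$ induce an equivalence between $\Ho{\gpd}$ and the homotopy category of spaces whose homotopy groups vanish outside degrees $0$ and $1$. The entire argument then amounts to observing that every functor in sight respects the action of an $E_\infty$ operad, and to tracking that structure through $\Pi_1$ and $B$.

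The bridge from the stable to the unstable world is infinite loop space theory. I would invoke the recognition principle: the functor $\Omega^\infty$, together with an infinite loop space machine as its homotopy inverse, identifies the homotopy category of connective spectra with that of grouplike $E_\infty$ spaces. Restricting to homotopy concentrated in degrees $0$ and $1$, this gives an equivalence between $\Ho{\stab}$ and the homotopy category of grouplike $E_\infty$ one-types. A map of stable one-types is a stable equivalence precisely when the induced map of zeroth spaces is a weak equivalence, so this restriction is well defined on homotopy categories. It then remains to identify grouplike $E_\infty$ one-types with Picard groupoids. Here the key point is that $\Pi_1$ and $B$ are each compatible with the operad actions, so they carry $E_\infty$-algebras to $E_\infty$-algebras: for a grouplike $E_\infty$ space $X$ the fundamental groupoid $\Pi_1 X$ acquires a symmetric monoidal structure in which every object is invertible, i.e.\ a Picard groupoid, while for a Picard groupoid $\sC$ the classifying space $B\sC$ is a grouplike $E_\infty$ space with homotopy in degrees $0$ and $1$. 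Thus Whitehead's equivalence restricts to an equivalence between grouplike $E_\infty$ one-types and $\Ho{\sympic}$.

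Composing the two reductions yields that
\[
\Ho{\stab} \xrightarrow[\hty]{\ \Pi_1\ } \Ho{\sympic}
\]
is an equivalence, with homotopy inverse induced by $B$ followed by delooping; on objects the composite sends a stable one-type $E$ to a Picard groupoid $\sC$ with $\pi_0\sC \iso \pi_0 E$ and $\pi_1\sC \iso \pi_1 E$. The main obstacle I expect is precisely the compatibility claim of the previous paragraph: one must make the $E_\infty$ structure on $\Pi_1 X$ genuinely coherent and homotopy-invariant, verifying that $\Pi_1$ and $B$ are maps of $E_\infty$-algebras carrying all the higher coherence data, functorial in the appropriate sense, and inverting the correct notion of equivalence. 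Getting these coherences to line up --- rather than the group-completion or truncation steps, which are formal once the structured functors are in hand --- is where the real work lies.
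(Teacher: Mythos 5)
Your proposal follows essentially the same route as the paper: reduce to grouplike $E_\infty$ one-types via infinite loop space theory, then show $\Pi_1$ and $B$ respect the operad actions so that the classical groupoid/one-type equivalence restricts to Picard groupoids. The coherence issue you flag as the remaining work is exactly what the paper handles, by working over the Barratt--Eccles operad $\sO$ (so that $B\sO$ and $\Pi_1 B\sO$ are $E_\infty$ operads in $\Top$ and $\cat$ respectively) and using naturality of $\sC \to \Pi_1 B\sC$ and $X \to B\Pi_1 X$ to see they are structured maps.
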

\begin{proof}
  We first recall that the homotopy category of connective spectra is
  equivalent to the homotopy category of group-like $E_{\infty}$
  spaces, and this equivalence descends to the category of stable
  one-types and the subcategory of group-like $E_{\infty}$ spaces with
  no higher homotopy groups. Thus we can work in the context of
  $E_{\infty}$ spaces.

  It is a classical result that the classifying space and fundamental
  groupoid functors give an equivalence
  \[\xym{
    \Pi_1 \cn \Ho{\Top_0^1} \ar@<.5ex>[r]^-{\hty} & \Ho{\gpd} \cn B \ar@<.5ex>[l]
  }\]
  where $\Top_0^1$ is the category of one-type spaces.  Thus it suffices
  to show that $\Pi_1$ and $B$ induce an equivalence between the
  homotopy categories of group-like $E_\infty$ one-types and Picard
  groupoids.

  Let $\sO$ be the categorical Barrat-Eccles operad---its $j$th
  category $\sO(j)$ is the translation groupoid of the action of
  $\Sigma _j$ on itself and its algebras are permutative categories
  \cite{may72geo}.  Then $B\sO$ is an $E_\infty$ operad in $\Top$: If
  $\sC$ is a symmetric monoidal category, then $B\sC$ is an $E_\infty$
  space and if $\sC$ is a Picard groupoid, then $B\sC$ is a group-like
  $E_\infty$ one-type. If $F \cn \sC \to \sD$ is a functor between
  Picard groupoids, then $BF \cn B\sC \to B\sD$ is an $E_\infty$ map.

  The operad $\Pi_1 B \sO$ is an $E_\infty$ operad in categories, and
  $\Pi_1$ preserves products.  If $X$ is an $E_\infty$ one-type, then $\Pi_1
  X$ is a symmetric monoidal groupoid.  Moreover, $\pi_0 X \iso \pi_0
  \Pi_1 X$, so $\Pi_1 X$ is a Picard category if $X$ is group-like. If
  $f\cn X \to Y$ is a map of $E_\infty$ spaces, then $\Pi_1 f$ is a
  symmetric monoidal functor.

  Now consider the equivalence $\sC \to \Pi_1B\sC$. Since this functor
  is part of a natural transformation of functors from $\gpd$ to
  itself, we have functors $\sO(j)\times \sC^j\to \Pi_1B\sO(j)\times
  (\Pi_1B\sC)^j$ that commute with the structure maps of the algebras
  $\sC$ and $\Pi_1B\sC$, thus showing that the equivalence $\sC \to
  \Pi_1B\sC$ is a symmetric monoidal functor. A similar argument shows
  that for a stable one-type $X$, the weak equivalence $X\to B\Pi_1X$ is
  an $E_{\infty}$ map.
\end{proof}

\begin{rmk}
  One can actually show that the fundamental groupoid of an $E_3$
  algebra is symmetric monoidal. Indeed, if $\sO$ is an $E_3$ operad
  in $\Top$, then $\Pi_1\sO$ is an $E_{\infty}$ operad in $\cat$. This
  is because the fundamental groupoid depends only on the homotopy
  one-type of a space, and thus the obstructions to lifting an $E_3$
  structure to an $E_\infty$ structure on a groupoid
  vanish. Alternatively, one can provide an explicit argument using
  specific points of the little 3-cubes operad $\sC_3$ to prove that
  if $X$ is an algebra over $\sC_3$ then $\Pi _1 X$ is a symmetric
  monoidal category. An example of this strategy can be found in
  \cite[Theorem 15]{gurski11}, where the author proves that the
  fundamental 2-groupoid of an algebra over the little 2-cubes operad
  is braided monoidal.
\end{rmk}



\section{Strictification}

In this section we prove a strictification result for skeletal Picard
groupoids.  The result is an algebraic reflection of the fact that the
first $k$-invariant of a connected double loop space is trivial
\cite[Theorem 5.8]{BC1997iterloop}.

\begin{defn}
  A Picard groupoid is \emph{permutative} if it is strictly
  associative and strictly unital.
\end{defn}

\begin{thm}\label{prop:skel-strictification}
  Every Picard groupoid is equivalent to one which is both skeletal
  and permutative.
\end{thm}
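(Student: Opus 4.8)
The plan is to skeletonize \emph{first} and strictify afterward, since the opposite order fails: passing to a skeleton of a permutative category can reintroduce a nontrivial associator, and this is precisely why the statement is stronger than the usual strictification. So first I would replace $\sC$ by a skeleton $\sC'$, choosing one object in each isomorphism class and transporting the symmetric monoidal structure along the resulting equivalence of underlying groupoids; this yields a skeletal Picard groupoid together with a symmetric monoidal equivalence $\sC \hty \sC'$. In $\sC'$ the objects form the abelian group $A = \pi_0\sC$ under $\oplus$, and because isomorphic objects are now equal we have $a \oplus b = b \oplus a$ and $(a \oplus b)\oplus c = a \oplus (b \oplus c)$ as objects; thus the underlying monoid of objects is \emph{already} strictly associative, unital, and commutative. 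Each automorphism group is identified with $B = \pi_1\sC$, and all the coherence isomorphisms become automorphisms: the associator is a function $\al \cn A^3 \to B$, the unitors are functions $A \to B$, and the symmetry is a function $A^2 \to B$. The only remaining non-strictness lives in these automorphisms.

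Next I would record the cocycle conditions. The pentagon axiom says exactly that $\al$ is a $3$-cocycle in the bar complex computing the group cohomology $H^*(A;B)$ with trivial coefficients, and a standard normalization of the unit isomorphisms (forcing the unitors to be identities via the triangle axiom) makes $\al$ a normalized $3$-cocycle. Adjusting the monoidal constraints of $\sC'$ by a normalized $2$-cochain $\phi \cn A^2 \to B$ changes $\al$ by the coboundary $d\phi$ while keeping the category skeletal and strictly unital. Hence $\sC'$ is symmetric monoidally equivalent to a skeletal, strictly unital, strictly associative -- that is, permutative -- Picard groupoid precisely when the class $[\al] \in H^3(A;B)$ vanishes.

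The heart of the argument is the vanishing of $[\al]$, and here I would pass to classifying spaces. The two-fold bar construction $BB\sC'$ is a connected space with $\pi_1 \iso A$ and $\pi_2 \iso B$, and its first $k$-invariant in $H^3(A;B)$ is identified with the associator class $[\al]$. Because $\sC'$ is symmetric monoidal, $B\sC'$ is a group-like $E_\infty$ space, so $BB\sC'$ is an infinite loop space and in particular a connected double loop space. By \cite[Theorem 5.8]{BC1997iterloop} the first $k$-invariant of a connected double loop space is trivial, whence $[\al] = 0$.

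Choosing $\phi$ with $d\phi = \al$ and transporting the symmetry accordingly then produces the desired skeletal permutative Picard groupoid, and composing with $\sC \hty \sC'$ gives the equivalence of symmetric monoidal categories asserted in the theorem. The main obstacle I anticipate is exactly this last transport: I must verify that trivializing the associator by $\phi$ can be organized into a genuine \emph{symmetric} monoidal equivalence, i.e.\ that the transported braiding still satisfies the hexagon and symmetry axioms relative to the now-trivial associator. This is a coherence bookkeeping step -- the axioms transform covariantly under the change of constraints -- but it is where the symmetric, rather than merely monoidal, structure must be handled with care. Note finally that the symmetry itself need not be strictified: a permutative category requires only strict associativity and unit, so the possibly nontrivial symmetry $A^2 \to B$ (which carries the genuine stable $k$-invariant) is allowed to survive unchanged.
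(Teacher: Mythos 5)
Your argument is correct in outline, but it takes a genuinely different route from the paper's. Both proofs begin identically: skeletonize, encode the remaining constraints as a pair $(h,c)$ with $h$ a normalized $3$-cocycle, and observe that only the class of $h$ in ordinary group cohomology obstructs strict associativity. Where the paper then gives a self-contained algebraic computation---reducing to cyclic $\pi_0$ via filtered colimits and the K\"unneth theorem, invoking the explicit Joyal--Street representatives $h_\mu$, and proving the key vanishing $n\,c(1,1)=0$ directly from the antisymmetry of $c$ in \cref{lem:symm-3-cocycle}---you outsource the vanishing of $[h]\in H^3(\pi_0\sC;\pi_1\sC)$ to topology: the delooping $BB\sC'$ is a connected double loop space, so its first $k$-invariant vanishes by \cite[Theorem 5.8]{BC1997iterloop}, and that $k$-invariant is the associator class. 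This is legitimate and not circular (Baues--Conduch\'e prove their theorem independently), but it quietly imports two nontrivial external inputs: their theorem itself, and the identification of the associator class of a categorical group with the first $k$-invariant of its classifying $2$-type (S\'inh's theorem), which is of comparable depth to \cref{prop:braidings-ab-3-cocycles}. The paper presents its theorem as ``an algebraic reflection'' of the double-loop-space fact; your proof runs that reflection backwards. What the algebraic route buys in exchange is an explicit normal form, $(h,c)\sim(0,\rho_{c(1,1)})$, showing the entire symmetric monoidal structure is determined by the quadratic map $x\mapsto c(x,x)$; that refinement is what drives \cref{cor:model-postnikov-invar}, and your argument does not recover it. Two smaller remarks: your point that one must skeletonize \emph{before} strictifying (since skeletonizing a permutative category can reintroduce an associator) is exactly right; and the ``transport of the braiding'' step you flag as the main obstacle is genuinely unproblematic---twisting the constraints by a $2$-cochain $\phi$ with $d\phi=h$ is transport of structure along an identity-on-objects equivalence, so the twisted pair $\bigl(0,\,c(x,y)\pm(\phi(x,y)-\phi(y,x))\bigr)$ automatically satisfies the hexagon and antisymmetry axioms, while the paper sidesteps even this by working entirely inside $H^3_{\text{sym}}$, where the coboundary relation carries $c$ along with $h$ by definition.
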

\noindent The proof appears after Proposition~\ref {prop:braidings-ab-3-cocycles},
which classifies Picard groupoids by symmetric 3-cocycles.
Analogous results for stable crossed modules appear in \cite{BC1997iterloop}.

\begin{defn}[Symmetric 3-cocycle]
  Let $G$ be an abelian group and $M$ a trivial $G$-module.  A
  \emph{symmetric 3-cocycle} for $G$ with coefficients in $M$ is a pair $(h,c)$
  where $h$ is a normalized 3-cocycle:  for $x, y, z \in G$
  \begin{align*}
    h(x,0,z) & = 0, \\
    h(x,y,z) + h(u, x+y, z) + h(u,x,y) & = h(u, x, y+z) + h(u+x, y, z),
\intertext{and $c \cn G^2 \to M$ is a function satisfying}
    h(y, z, x) + c(x, y+z) + h(x,y,z) &
    = c(x,z) + h(y, x, z) + c(x, y),\\
    c(x,y) & = -c(y,x).
  \end{align*}
  We say two symmetric 3-cocycles $(h,c)$ and $(h',c')$ are cohomologous if there exists a function $k \cn G^2\to M$ satisfying
  \begin{align*}
    k(x,0) &=k(0,y) = 0,\\
    h(x,y,z)-h'(x,y,z) & =k(y,z)-k(x+y,z)+k(x,y+z)-k(x,y),\\
    c(x,y)-c'(x,y) & = k(x,y)-k(y,x).
  \end{align*}
  We denote the group of cohomology classes of symmetric 3-cocycles by
  $H^3_{\text{sym}}(G;M)$.
\end{defn}

\begin{defn}[{\cite[Chapter 2, \S 2]{sinh1975}},{\cite[\S3]{JS1993btc}}]
  Let $G$ be an abelian group, $M$ a trivial $G$ module, and $(h,c)$
  a symmetric 3-cocycle for $G$ with coefficients in $M$.  We define a
  skeletal Picard groupoid $\sT = \sT(G, M, (h,c))$ whose
  objects are the elements of $G$ and whose morphisms are given by
  \[
  \sT(x,y) = 
  \begin{cases}
    M & \text{ if } x = y\\
    \emptyset & \text{ if } x \neq y.
  \end{cases}
  \]
  Composition is defined by the addition in $M$ and the monoidal
  structure is addition in $G$.  The associativity is determined by
  \[
  h(x,y,z) \cn (x+y) + z \to x + (y + z)
  \]
  and the symmetry isomorphism is determined by
  \[
  c(x,y) \cn x + y \to y + x.
  \]
  The axioms of a symmetric 3-cocycle are precisely the axioms for
  compatibility of the symmetry and associativity in a skeletal symmetric
  monoidal groupoid.
\end{defn}

\begin{prop}[{\cite[Chapter II, \S 2.1]{sinh1975},{\cite[\S 3]{JS1993btc}}}]
  \label{prop:braidings-ab-3-cocycles}
  Every Picard groupoid $\sC$ is equivalent to a skeletal one
  of the form $\sT(G,M,(h,c))$ where $G=\pi_0 \sC$, $M=\pi_1 \sC$, and
  $(h,c)\in H^3_{\text{sym}}(G;M)$ is a symmetric 3-cocycle that
  represents the associativity and the symmetry.

  Two skeletal Picard groupoids $\sT(G,M,(h,c))$ and
  $\sT(G,M,(h',c'))$ are equivalent if and only if $(h,c)$ and
  $(h',c')$ are cohomologous.
\end{prop}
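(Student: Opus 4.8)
The plan is to prove the two parts separately: first that every Picard groupoid admits a skeletal model of the prescribed form, and then that two such models are monoidally equivalent exactly when their cocycles are cohomologous. For the first part I would begin by choosing a skeleton $\sC_{\mathrm{sk}}$ of the underlying groupoid of $\sC$, one object in each isomorphism class, and transport the symmetric monoidal structure along the inclusion equivalence $\sC_{\mathrm{sk}} \hookrightarrow \sC$. The objects of $\sC_{\mathrm{sk}}$ are then identified with $G = \pi_0\sC$, and since every object $x$ is invertible, tensoring with a chosen inverse $x^{\ast}$ together with a trivialization $x \oplus x^{\ast} \iso I$ identifies $\operatorname{Aut}(x)$ with $\operatorname{Aut}(I) = M = \pi_1\sC$; I would fix such identifications once and for all. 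Because $G$ is an abelian group, the source and target of each associator $a_{x,y,z}\cn (x\oplus y)\oplus z \to x\oplus(y\oplus z)$ and each symmetry $c_{x,y}\cn x\oplus y \to y\oplus x$ coincide as objects of $\sC_{\mathrm{sk}}$, so these structure isomorphisms become automorphisms, hence functions $h\cn G^3 \to M$ and $c\cn G^2 \to M$. The heart of this part is to check that the unit axioms, the pentagon, the hexagon, and the symmetry axiom translate under these identifications into exactly the normalization $h(x,0,z)=0$, the $3$-cocycle identity, the hexagon identity relating $h$ and $c$, and the relation $c(x,y)=-c(y,x)$; this yields $\sC \hty \sT(G,M,(h,c))$.

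For the second part I would study symmetric monoidal functors $F\cn \sT(G,M,(h,c)) \to \sT(G,M,(h',c'))$ respecting the canonical identifications $\pi_0 = G$ and $\pi_1 = M$. Since both groupoids are skeletal with the same object set, such an $F$ may be normalized to be the identity on objects and on each automorphism group, so its only remaining datum is the monoidal structure constraint $\phi_{x,y}\cn x \oplus y \to x\oplus y$, which is an automorphism and hence a function $k\cn G^2 \to M$. I would then read off the monoidal-functor axioms: unitality of $F$ gives $k(x,0)=k(0,y)=0$, compatibility of $\phi$ with the associators $h$ and $h'$ gives $h(x,y,z)-h'(x,y,z) = k(y,z)-k(x+y,z)+k(x,y+z)-k(x,y)$, and compatibility with the symmetries gives $c(x,y)-c'(x,y)=k(x,y)-k(y,x)$. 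Thus a structure-preserving monoidal functor is precisely a cohomology $k$ between $(h,c)$ and $(h',c')$; conversely any such $F$ is automatically an equivalence, being a bijection on objects and an isomorphism on each hom-set. This establishes both directions.

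The main obstacle I anticipate is purely the bookkeeping of translating coherence diagrams into additive identities in $M$ with the correct signs. The subtlest point is that reducing the coherence and functor axioms to equations in $M$ relies on the chosen identifications $\operatorname{Aut}(x) \iso M$ being compatible with the tensor product---specifically, that tensoring a morphism with an identity acts as the identity of $M$, so that terms such as $a_{x,y,z}\oplus \mathrm{id}_w$ contribute trivially and the pentagon really does collapse to the four-term $3$-cocycle relation. I would therefore take care in fixing the identifications $\operatorname{Aut}(x)\iso M$ coherently and verifying this additivity before extracting the equations. I would also make explicit the convention, implicit in the statement, that the equivalences under consideration induce the identity on $\pi_0$ and $\pi_1$; allowing nontrivial automorphisms of $G$ and $M$ would introduce an action of $\operatorname{Aut}(G)\times\operatorname{Aut}(M)$ that the stated cohomology relation deliberately suppresses.
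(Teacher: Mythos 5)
The paper gives no proof of this proposition, citing S\'inh and Joyal--Street instead, and your proposal reproduces the standard argument from those sources: skeletalize and transport the structure, identify each $\operatorname{Aut}(x)$ with $M=\pi_1\sC$ by tensoring, read the pentagon/hexagon/symmetry axioms as the symmetric $3$-cocycle conditions, and read the structure constraint of a monoidal equivalence as a coboundary $k$. Your explicit flagging of the two real subtleties---that the identifications $\operatorname{Aut}(x)\iso M$ must be additive under $\oplus$ so that terms like $a_{x,y,z}\oplus \mathrm{id}_w$ contribute $h(x,y,z)$, and that the ``only if'' direction requires the convention that equivalences induce the identity on $\pi_0$ and $\pi_1$---is exactly right, so this is correct and essentially the intended proof.
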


\begin{proof}[of Theorem~\ref {prop:skel-strictification}.]
  By Proposition~\ref {prop:braidings-ab-3-cocycles}, it suffices to consider a
  skeletal Picard groupoid $\sC = \sT(G,M,(h,c))$.  Moreover, any
  abelian group $G$ is a filtered colimit of finitely generated
  abelian groups, and any finitely generated abelian group is a direct
  sum of cyclic groups. Thus, by making use of the K\"unneth theorem
  and colimits over finitely generated abelian groups, it suffices to
  consider the case where $G$ is cyclic.  This strategy for studying
  Picard groupoids appears in \cite[26.4]{EM54groups} and
  \cite[3.2]{JS1993btc}.

  Let $G$ be a cyclic group. We now prove that there is some $c'$ such
  that $[(h,c)]=[(0,c')]$ in $H^3_{\text{sym}}(G; M)$.  This is
  immediate in the infinite cyclic case since $H^3(\bZ; M)=0$.  Now
  suppose $G = \bZ/n$.  The third cohomology group is
  \[
  H^3(\bZ/n; M) \iso \{\mu \in M| n\mu = 0\}.
  \]
  Following Joyal and Street \cite[\S 3]{JS1993btc} we have an
  explicit formula for cocycle representatives corresponding to $\mu$:
  \[
  h_{\mu}(x,y,z) =
  \begin{cases}
    0 & \text{for } y+z < n\\
    x \mu & \text{for } y+z \ge n
  \end{cases}
  \]
  where $x, y, z$ are taken to be integers in $\{0 , \ldots, n-1\}$
  and addition is performed over the integers to determine the values
  of $h_\mu$.

  A calculation shows
  \begin{align}
    \label{eq:cocycle-equiv}
    (h,c) & \sim (h_{n c(1,1)}, \rho_{c(1,1)}),
  \end{align}
  where $\rho_{c(1,1)}$ denotes the symmetry given by
  \[
  (x, y) \mapsto xy \cdot c(1,1).
  \]
  This equivalence of cocycles determines a symmetric monoidal
  equivalence of the corresponding symmetric monoidal categories.
  Since the braiding on $\sC$ is a symmetry, we have $c(x,y) =
  -c(y,x)$.

  Now note that if $(h,c)$ is a symmetric 3-cocycle, then $n c(1,1) =
  0$ by Lemma~\ref {lem:symm-3-cocycle}, so $h_{nc(1,1)} = h_0 = 0$.
  Therefore Eq.~\textup {(\ref {eq:cocycle-equiv})} shows that $\sC$ is equivalent (as
  a Picard groupoid) to one whose representing cocycle is
  $(0,\rho_c(1,1))$ and thus is both skeletal and permutative.
\end{proof}

\begin{lem}\label{lem:symm-3-cocycle}
  If $(h,c)$ is a symmetric 3-cocycle of $\bZ/n$ with coefficients in
  $M$, then $n c(1,1) = 0$.
\end{lem}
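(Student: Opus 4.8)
The plan is to bound the order of $c(1,1)$ in two independent ways and then combine them arithmetically. The first bound is free: evaluating the antisymmetry axiom $c(x,y) = -c(y,x)$ at $x = y = 1$ gives $c(1,1) = -c(1,1)$, hence $2c(1,1) = 0$.

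The second bound comes from the symmetry axiom together with the cyclic structure of $\bZ/n$. Setting $x = z = 1$ in the hexagon relation, the two occurrences of $h(y,1,1)$ cancel and leave
\[ c(1, y+1) - c(1, y) = c(1,1) - h(1,y,1). \]
Summing over $y = 0, \ldots, n-1$, the left-hand side telescopes around the cycle to $0$, so
\[ n\,c(1,1) = \sum_{y=0}^{n-1} h(1,y,1). \]
I would then argue that this right-hand side is $n$-torsion. The quickest route is to note that the sum is invariant under changing $h$ by a coboundary and that for the Joyal--Street representative $h_\mu$ it equals $\mu$; hence it represents the class $[h] \in H^3(\bZ/n; M) \iso \{\mu \in M : n\mu = 0\}$. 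Equivalently, summing the cocycle identity for $h$ over its first variable shows that $b \mapsto \sum_y h(1,y,b)$ is additive in $b$, so $n \sum_y h(1,y,1) = \sum_y h(1,y,0) = 0$ by normalization of $h$. Either way $n\,(nc(1,1)) = 0$, i.e. $n^2 c(1,1) = 0$.

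Finally I would combine the two bounds. From $2c(1,1) = 0$ and $n^2 c(1,1) = 0$ the order of $c(1,1)$ divides $\gcd(2, n^2) = \gcd(2,n)$, which in turn divides $n$; therefore $nc(1,1) = 0$. Concretely: for odd $n$ the gcd is $1$, forcing $c(1,1) = 0$, while for even $n$ one has $nc(1,1) = (n/2)\,(2c(1,1)) = 0$. The main obstacle is exactly this last point --- antisymmetry by itself yields only $2c(1,1) = 0$, which does not kill $nc(1,1)$ when $n$ is even --- so the proof genuinely needs the second, cohomological input that $nc(1,1)$ is the $n$-torsion class of the associator, together with the elementary coincidence $\gcd(2,n^2) = \gcd(2,n) \mid n$.
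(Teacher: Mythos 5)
Your proof is correct, and it reaches the crucial second torsion bound by a different (more self-contained) route than the paper. Both arguments begin with $2c(1,1)=0$ from antisymmetry, which already disposes of even $n$, and both then need one more input to handle odd $n$. The paper gets it by invoking the identity $c(x,x)=x^2c(1,1)$ (quadraticity of the diagonal of the symmetry, quoted from Joyal--Street) at $x=-1=n-1$, so that $c(1,1)=(n-1)^2c(1,1)=0$ for odd $n$ because $(n-1)^2$ is even. You instead specialize the hexagon axiom at $x=z=1$, telescope over $\bZ/n$ to get $nc(1,1)=\sum_{y}h(1,y,1)$, and then show the right-hand side is $n$-torsion. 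Of your two arguments for that last point, the second is the better one: summing the 3-cocycle identity over a complete residue system shows $b\mapsto\sum_y h(1,y,b)$ is a homomorphism, hence its value at $b = 1$ is killed by $n$ (and additivity gives $\phi(0)=2\phi(0)=0$ directly, so you need not worry that the paper's normalization is only stated for the middle variable). Your first argument --- coboundary-invariance of the sum plus the identification of $H^3(\bZ/n;M)$ with $\{\mu\in M : n\mu=0\}$ via the representatives $h_\mu$ --- also works, but it imports the Eilenberg--Mac~Lane/Joyal--Street computation and so is no more elementary than the paper's citation. The final arithmetic ($\gcd(2,n^2)$ divides $n$) is correct. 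What your version buys is a derivation entirely from the listed cocycle axioms, making explicit where the interaction between $c$ and $h$ enters; what the paper's version buys is brevity, at the cost of leaving the identity $c(x,x)=x^2c(1,1)$ unproved in the text.
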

\begin{proof}
  Since $c$ is symmetric, $c(1,1) = -c(1,1)$, and thus $2 c(1,1) =
  0$.  If $n$ is even, then the result follows; if $n$ is odd, we make
  use of the identity $c(x,x) = x^2 c(1,1)$ for all $x \in \bZ/n$:
  \[
  c(1,1) = (-1)^2 c(1,1) = c(-1,-1) = c(n-1, n-1) = (n-1)^2 c(1,1).
  \]
  For $n$ odd, $n-1$ is even and thus the last term is zero.
\end{proof}

The calculation of Eq.~\textup {(\ref {eq:cocycle-equiv})} shows that the symmetry $c$
completely determines the cohomology class of the symmetric 3-cocycle
$(h, c)$ of a skeletal Picard groupoid.  But \cite[\S 3]{JS1993btc}
shows that the symmetry determines and is determined by the quadratic
map
\[
q = c \circ \De \cn G \to M.
\]
\begin{defn}[Quadratic map]
  A map $q \cn G \to M$ is \emph{quadratic} if
  \begin{align*}
    q(x) & = q(-x), \\
    q(x + y + z) + q(x) + q(y) + q(z) & = q(y+z) + q(z+x) + q(x+y).
  \end{align*}
\end{defn}
Eilenberg and Mac Lane \cite{EM54groups}, and Loday \cite{Loday82} show
that the set of quadratic maps $q: G \to M$ is isomorphic to the set
of homotopy classes of maps $[K(G, n) , K(M, n+2)]$ for $n \ge 3$,
which is the set of stable homotopy classes of maps $[K(G,0),
K(M,2)]_{\text{stable}}$.  This is the set of possible Postnikov
invariants of a stable one-type with $\pi_0 = G$ and $\pi_1 = M$.
Thus we have the following refinement of
Theorem~\ref {thm:stable-1-type-symm-picard-equiv}:

\begin{cor}\label{cor:model-postnikov-invar}
  The stable one-types with $\pi_0 = G$ and $\pi_1 = M$ are classified
  by the symmetric structures on a skeletal and permutative monoidal
  groupoid with objects $G$ and each endomorphism group isomorphic to
  $M$.
\end{cor}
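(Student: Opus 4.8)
The plan is to assemble this corollary from the classification and cohomology results already established; no genuinely new argument is needed, only a careful chaining of bijections. First I would apply Theorem~\ref{thm:stable-1-type-symm-picard-equiv} to replace the classification of stable one-types with $\pi_0 = G$ and $\pi_1 = M$ by the classification, up to weak equivalence in $\sympic$, of Picard groupoids $\sC$ with $\pi_0\sC \iso G$ and $\pi_1\sC \iso M$. By Theorem~\ref{prop:skel-strictification} together with Proposition~\ref{prop:braidings-ab-3-cocycles}, every such $\sC$ is equivalent to a skeletal and permutative model $\sT(G, M, (0,c))$, whose objects are the elements of $G$, whose endomorphism groups are each $M$, whose associator is trivial, and whose only remaining structure is the symmetry $c$.

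Next I would pin down the correct notion of sameness for this remaining data. Proposition~\ref{prop:braidings-ab-3-cocycles} says that two skeletal models are equivalent exactly when their defining cocycles are cohomologous, and the computation behind Eq.~\textup{(\ref{eq:cocycle-equiv})} shows that the cohomology class of a symmetric 3-cocycle $(h,c)$ is determined entirely by the symmetry $c$ (indeed by $c(1,1)$ in the cyclic case). Hence equivalence classes of skeletal and permutative Picard groupoids with the prescribed $\pi_0$ and $\pi_1$ correspond bijectively to symmetric structures $c$, taken up to the equivalence of cocycles. This is precisely the content of being ``classified by the symmetric structures on a skeletal and permutative monoidal groupoid with objects $G$ and endomorphisms $M$.''

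Finally I would connect the algebraic datum to the Postnikov invariant. Following Joyal--Street, the symmetry $c$ determines and is determined by the quadratic map $q = c \circ \De \cn G \to M$, and by the results of Eilenberg--Mac Lane and Loday cited above, the set of quadratic maps $G \to M$ is in bijection with the stable homotopy classes $[K(G,0), K(M,2)]_{\text{stable}}$, which is exactly the set of possible Postnikov invariants of a stable one-type with $\pi_0 = G$ and $\pi_1 = M$. Composing these bijections with the equivalence of the previous paragraph yields the statement.

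The step that will require the most care is the \emph{compatibility} of these bijections rather than their existence: one must verify that the Postnikov invariant of the stable one-type $B\sT(G, M, (0,c))$ agrees, under the Eilenberg--Mac Lane/Loday isomorphism, with the quadratic map $q = c \circ \De$ attached to its symmetry. This is what upgrades a matching of equivalence classes into an honest classification, and I expect it to follow from the naturality of the fundamental groupoid and classifying space functors in Theorem~\ref{thm:stable-1-type-symm-picard-equiv}, combined with the explicit cocycle description of the symmetric monoidal structure, rather than from any further computation.
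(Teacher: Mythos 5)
Your proposal is correct and follows essentially the same route as the paper, which presents this corollary as an immediate consequence of chaining Theorem~\ref{thm:stable-1-type-symm-picard-equiv}, the strictification via Eq.~\textup{(\ref{eq:cocycle-equiv})}, the Joyal--Street correspondence between symmetries and quadratic maps, and the Eilenberg--Mac Lane/Loday identification of quadratic maps with stable $k$-invariants. The compatibility check you rightly flag as the delicate point is not carried out at this spot in the paper either; it is effectively deferred to Section~\ref{sec:model-Postnikov-invars} and Corollary~\ref{cor:Postnikov-invars}, where the Postnikov invariant is modeled directly by the cokernel construction.
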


\begin{rmk}
  The contrast between triviality of unstable $k$-invariants and
  non-triviality of stable $k$-invariants may be worth clarifying:
  When modeling connected spaces with nontrivial $\pi_1$ and $\pi_2$,
  it is the associativity of a monoidal groupoid (with invertible
  objects) which gives the first $k$-invariant of the corresponding
  space.  However when modeling spectra with nontrivial $\pi_0$ and
  $\pi_1$ it is the \emph{symmetry} of a Picard groupoid which gives
  the first (stable) $k$-invariant.  A consequence of
  Theorem~\ref {thm:stable-1-type-symm-picard-equiv},
  Corollary~\ref {cor:model-postnikov-invar}, and \cite[Theorem
    5.8]{BC1997iterloop} is that the first $k$-invariant of a stable
  one-type is unstably trivial.
\end{rmk}



\section{The truncated sphere spectrum}\label{sec:truncated-sphere}

We now define a skeletal and permutative Picard groupoid $\bS$ and
explain how it is an algebraic model of the truncated sphere spectrum.
The objects of $\bS$ are the integers under addition, and the
morphisms are given by
\[
\bS(m,n) =
\begin{cases}
  \emptyset & \text{if } m \neq n\\
  \bZ/2 & \text{if } m = n.
\end{cases}
\]
We let $\eta_n$ denote the nontrivial element of $\bS(n,n)$ for each
$n$.  The monoidal structure is symmetric, with the symmetry
isomorphism given by
\[
c_{m,n}=\begin{cases}
   0 & \text{if } mn \text{ is even}\\
   \eta_{m+n} & \text{if } mn \text{ is odd}.
   \end{cases}
\]
Note that this symmetry isomorphism gives rise to the stable quadratic map
$q:\bZ \to \bZ/2$ given by the mod 2 map.

The Picard groupoid $\bS$ is closely related to the category of finite
sets, as we now describe.  Let $\sE$ be the skeletal category whose
objects are the finite sets $\un{0} = \emptyset$, $\un{n}=\{1, 2,
\dots, n\}$ and whose morphism sets are given by the symmetric
groups. This is a permutative category, with sum given by sum in $\bN$
and with symmetry isomorphism $ c_{m,n}^{\oplus}$ given by the
permutation that sends $(1, 2, \dots, m+n)$ to $(m+1, m+2, \dots, m+n,
1, 2, \dots, m)$.  Note that $\sE$ is skeletal and is equivalent to
the category of finite sets.

There is a symmetric monoidal functor
\[
\xi: \sE \to \bS
\]
given on objects by the inclusion of $\bN$ into $\bZ$ and on morphisms
by the sign homomorphism $\Sigma_n \to \bZ/2$.  This is the functor
that first abelianizes the group of endomorphisms of each object
$\un{n}$, and then includes into $\bS$.

The next three results justify our notation for $\bS$ by showing that
it is the free Picard groupoid on one object, its
classifying space is the Postnikov 1-truncation of $QS^0$, and its
natural action on a Picard groupoid $\sC$ is a model for the
action of the truncated sphere spectrum on $B\sC$.

\begin{prop}\label{prop:free-symm-Pic-cat}
  The Picard groupoid $\bS$ is symmetric monoidally equivalent
  to the free Picard groupoid on one object,
  $\sF_{Pic}(\ast)$.
\end{prop}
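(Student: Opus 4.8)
The plan is to characterize $\sF_{Pic}(\ast)$ by its universal property and to check that $\bS$, pointed at its object $1$, shares that property. Write $\mathrm{SM}(\sD, \sC)$ for the groupoid of symmetric monoidal functors $\sD \to \sC$ and monoidal natural isomorphisms. By definition $\sF_{Pic}(\ast)$ is free on one object, meaning that evaluation at the free generator is an equivalence $\mathrm{SM}(\sF_{Pic}(\ast), \sC) \hty \sC$ onto the underlying groupoid, naturally in the Picard groupoid $\sC$. It therefore suffices to show that evaluation at $1$,
\[
\mathrm{ev}_1 \cn \mathrm{SM}(\bS, \sC) \to \sC,
\]
is an equivalence, naturally in $\sC$; the desired equivalence $\bS \hty \sF_{Pic}(\ast)$ then follows from uniqueness of corepresenting objects. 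By Theorem~\ref{prop:skel-strictification} I would assume throughout that $\sC$ is skeletal and permutative, so that the monoidal powers $a^{\oplus n}$ of an object $a$ are strictly functorial in $n \in \bZ$ (using invertibility of $a$ for $n<0$).

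For essential surjectivity I fix an object $a \in \sC$ and build $F_a \cn \bS \to \sC$ with $F_a(1) = a$. On objects, set $F_a(n) = a^{\oplus n}$. On morphisms it suffices to send the generator $\eta_n$ of $\bS(n,n) = \bZ/2$ to the image in $\mathrm{Aut}(a^{\oplus n}) \iso \pi_1\sC$ of the class $q(a) := [c_{a,a}] \in \pi_1\sC$, where $c$ denotes the symmetry of $\sC$. The symmetry axiom $c_{a,a} \circ c_{a,a} = \mathrm{id}$ forces $2q(a) = 0$, so this assignment respects $\eta_n^2 = \mathrm{id}$ and gives a well-defined functor. The one substantial point is that $F_a$ is symmetric monoidal: since $\sC$ is permutative, compatibility with associator and unit is automatic, and the content reduces to compatibility with the symmetries, i.e.\ to the identity
\[
[\,c_{a^{\oplus m},\, a^{\oplus n}}\,] = mn\cdot q(a) \quad\text{in } \pi_1\sC.
\]
This is the expected decomposition of the symmetry on monoidal powers into $mn$ elementary crossings; it is the quadratic-map formula $\rho_{c(1,1)}(x,y) = xy\cdot c(1,1)$ recalled above, applied to the sub-Picard-groupoid generated by $a$. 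Because $q(a)$ is $2$-torsion, the right-hand side equals $q(a)$ when $mn$ is odd and $0$ otherwise---precisely the rule $c_{m,n} = \eta_{m+n}$ for $mn$ odd that defines the symmetry of $\bS$. I expect the verification of this decomposition from the hexagon axioms and naturality of $c$ to be the main obstacle; the remaining checks are bookkeeping.

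Finally I would show $\mathrm{ev}_1$ is fully faithful. Every object of $\bS$ is a monoidal power of $1$, so a monoidal natural isomorphism $\theta \cn F \Rightarrow G$ is determined by its component $\theta_1 \cn F(1) \to G(1)$ via $\theta_n = \theta_1^{\oplus n}$. Conversely, any isomorphism $\theta_1$ extends to such a $\theta$: monoidality holds by construction, and naturality against the generators $\eta_n$ is automatic, since $\theta_1$ is an isomorphism and elements of $\pi_1\sC$ are central and so commute past $\theta_n$. Hence $\mathrm{ev}_1$ is bijective on isomorphism classes and on automorphism groups of functors, i.e.\ an equivalence, clearly natural in $\sC$. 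Comparing universal properties then yields a symmetric monoidal equivalence $\bS \hty \sF_{Pic}(\ast)$ carrying $1$ to the free generator. As a consistency check, this forces $\pi_0\sF_{Pic}(\ast) \iso \bZ$ and $\pi_1\sF_{Pic}(\ast) \iso \bZ/2$ with quadratic map the reduction mod~$2$, matching $\bS$.
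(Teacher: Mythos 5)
Your proof is correct, but it takes a genuinely different route from the paper's. The paper factors the free construction as $\sF_{Pic}=\sF_{inv}\circ\sF_{symMon}$, identifies $\sF_{symMon}(\ast)$ with the permutation category $\sE$, and then checks that $\xi\cn\sE\to\bS$ has the universal property of adjoining inverses: the only computational input is that endomorphism monoids in a Picard groupoid are abelian (Eckmann--Hilton), so any symmetric monoidal $G\cn\sE\to\sC$ factors through the sign homomorphism $\Sigma_n\to\bZ/2$, i.e.\ through $\xi$. You instead verify corepresentability directly, building $F_a\cn\bS\to\sC$ by hand from an object $a$; the computational input you need is the identity $[c_{a^{\oplus m},a^{\oplus n}}]=mn\cdot q(a)$ in $\pi_1\sC$, which you flag but do not prove. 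That step does hold, and more easily than you suggest: after your reduction to $\sC$ skeletal and permutative, the hexagon axiom says exactly that $(x,y)\mapsto [c_{x,y}]$ is a biadditive map $\pi_0\sC\times\pi_0\sC\to\pi_1\sC$ (the cocycle condition with $h=0$ in the paper's \S 2), and biadditivity over $\bZ$ immediately gives $mn\cdot q(a)$ for all integers $m,n$, including negative powers. Your full-faithfulness argument is also fine once one notes that $[F(\eta_1)]=[F(\eta_2)]=q(F(1))$ is forced for \emph{any} symmetric monoidal $F$ (since $\eta_2=\eta_1\oplus 1_1=c^{\bS}_{1,1}$), so $F(\eta_n)$ and $G(\eta_n)$ automatically match across $\theta$. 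The trade-off: the paper's route explains the appearance of $\bZ/2$ conceptually as the abelianization of $\Sigma_n$ and reuses the known free symmetric monoidal category, while yours is self-contained, makes the role of the quadratic map $q$ and the relation $2q(a)=0$ explicit, and gives the equivalence $\mathrm{SM}(\bS,\sC)\hty\sC$ as a statement in its own right; but it leans on the strictification theorem and on the biadditivity of the symmetry, which should be spelled out rather than deferred.
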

\begin{proof}
  The free Picard groupoid functor $\sF_{Pic}$ is equal to the
  composite of the free symmetric monoidal groupoid functor,
  $\sF_{symMon}$, with the functor that freely adjoins inverses for
  objects, $\sF_{inv}$. The free symmetric monoidal category on one
  object, $\sF_{symMon}(*)$, is symmetric monoidally equivalent to the
  category $\sE$ defined above.

  We now show that $\bS$ satisfies the universal property for
  $\sF_{inv}(\sE)$: Let $\sC$ be a Picard category and $G:\sE\to \sC$
  a symmetric monoidal functor.  We construct a symmetric monoidal
  functor $H$ making the diagram commute:
  \[
  \xym{
    \sE \ar[r]^{G} \ar[d]_{\xi} & \sC.\\
    \bS \ar@{-->}[ru]_{H} &
  }
  \]
  For every object $x\in \sC$ fix an inverse $x^{\ast}$. We define $H$
  on objects as
  \[
  H(n)=\begin{cases}
    G(n) & \text{if } n\geq 0\\
    G(|n|)^{\ast} & \text{if } n<0.
  \end{cases}
  \]
  To define $H$ on morphisms, note that $\sC(x,x)$ is an abelian group
  for all $x\in \sC$ and therefore $G$ factors through the
  abelianization of $\sE(n,n)$ and hence through $\xi$.  This
  factorization determines $H$ on the endomorphism group of $n$ for $n
  \ge 0$, and the values of $H$ on endomorphisms of negative $n$ are
  determined by translation. It is easy to see that $H$ is a symmetric
  monoidal functor.
  
  Now let $H'$ be another symmetric monoidal functor making the
  diagram commute. Note that for $n\geq 0$, we must have
  $H(n)=G(n)=H'(n)$.  On the other hand we have natural isomorphisms
  \[
  H(n) \oplus H(-n) \fto{\iso} I_\sC \fot{\iso} H'(n) \oplus H'(-n)
  \]
  and hence natural isomorphisms
  \[
  H(-n) \fto{\iso} H(n)^* = H'(n)^* \fot{\iso} H'(-n).
  \]
  These assemble to form a monoidal natural
  isomorphism between $H$ and $H'$.
\end{proof}

\begin{rmk}\label{rmk:S-biperm}
  Although a model for one-types of ring spectra is beyond the scope
  of this paper, we do note that $\bS$ has a second symmetric monoidal
  structure, so that it is a bipermutative groupoid. This second
  monoidal structure is given by:
  \begin{align*}
    (m,n)&\mapsto mn \in \bZ\\
    (f:m\to m,g:n\to n)&\mapsto nf+mg \in \bZ/2,
  \end{align*}
  The symmetry isomorphism is given by
  \[
  c_{m,n}^{\otimes}=\begin{cases}
    0 & \text{if } \binom{m}{2} \binom{n}{2} \text{ is even}\\
    \eta_{nm} & \text{if } \binom{m}{2} \binom{n}{2} \text{ is odd}.
  \end{cases}
  \]
  By \cite{May09construction}, $B\bS$ is an $E_{\infty}$ ring space.

  Furthermore, the category $\sE$ described above is a bipermutative
  category, with second product given by multiplication in $\bN$.
  This models the cartesian product of finite sets. The map $\xi:\sE
  \to \bS$ is a bipermutative functor, and $B\xi: B\sE \to B\bS$ is
  therefore an $E_{\infty}$ ring map.
\end{rmk}

\begin{prop}\label{prop:model-trunc-sphere}
  Let $QS^0$ be the zeroth space of the sphere spectrum. Then there is
  a map of $E_{\infty}$ ring spaces
 \[
 \overline{B\xi}:QS^0\longrightarrow B\bS
 \]
 which is the Postnikov 1-truncation of $QS^0$.
\end{prop}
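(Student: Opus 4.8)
The plan is to obtain $\overline{B\xi}$ as the group completion of the map $B\xi \cn B\sE \to B\bS$ and then to verify directly that it induces isomorphisms on $\pi_0$ and $\pi_1$, while $B\bS$ has no higher homotopy. First I would recall that, since $\sE$ is the permutative category of finite sets, its classifying space is $B\sE \hty \coprod_{n \ge 0} B\Sigma_n$, and the Barratt--Priddy--Quillen--Segal theorem identifies the group completion of this $E_\infty$ space with $QS^0$. Because $\bS$ is a Picard groupoid, $B\bS$ is a grouplike $E_\infty$ space, so by the universal property of group completion into grouplike $E_\infty$ spaces the map $B\xi$ factors, uniquely up to homotopy, through $QS^0$; this factorization is the desired $\overline{B\xi} \cn QS^0 \to B\bS$. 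To see that $\overline{B\xi}$ is a map of $E_\infty$ ring spaces I would invoke the bipermutative structures on $\sE$, $\bS$ and $\xi$ recorded in Remark~\ref{rmk:S-biperm}: the multiplicative infinite loop space machine makes $B\sE$ and $B\bS$ into $E_\infty$ ring spaces and $B\xi$ into an $E_\infty$ ring map, and group completion carries this structure to $QS^0$ \cite{May09construction}.

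Next I would compute the homotopy groups. On $\pi_0$ the map is the group completion of the inclusion $\bN \hookrightarrow \bZ$ on isomorphism classes of objects, i.e.\ the identity on $\bZ \iso \pi_0 QS^0$. For $\pi_1$ I would pass to the basepoint components and apply the Hurewicz theorem: both $QS^0$ and $B\bS$ are simple spaces, so $\pi_1$ agrees with $H_1$ of the zero component. On the source, $\pi_1 QS^0 = \pi_1^s \iso \bZ/2$ and $H_1\big((QS^0)_0\big) \iso \operatorname{colim}_n H_1(B\Sigma_n) \iso \bZ/2$; on the target, $(B\bS)_0 = B(\bZ/2)$ has $H_1 \iso \bZ/2$. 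Since $B\xi$ is, on the endomorphisms of each object, the sign homomorphism $\Sigma_n \to \bZ/2$, which is exactly the abelianization for $n \ge 2$, the induced map on $H_1$ is an isomorphism $\bZ/2 \to \bZ/2$ in the colimit, and hence so is $\pi_1(\overline{B\xi})$.

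Finally, since $\bS$ is a Picard groupoid, $B\bS$ is a grouplike $E_\infty$ one-type, so $\pi_i B\bS = 0$ for $i \ge 2$. Combined with the isomorphisms on $\pi_0$ and $\pi_1$, the universal property of truncation then exhibits $\overline{B\xi}$ as the Postnikov $1$-truncation of $QS^0$. The main obstacle is the $\pi_1$ computation: one must know both that the first stable stem is $\bZ/2$ and that its generator $\eta$ is detected by the sign homomorphism on the symmetric groups, equivalently that the abelianization map $H_1(\Sigma_n) \to \bZ/2$ realizes the generator of $\pi_1^s$. This is the classical identification of $\eta$ with the nontrivial class in $H_1(\Sigma_\infty)$, and it is precisely what makes the symmetry of $\bS$---the stable quadratic map $q \cn \bZ \to \bZ/2$---the correct algebraic shadow of the action of $\eta$.
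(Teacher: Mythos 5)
Your proposal follows the paper's proof exactly: factor $B\xi$ through the group completion $QS^0$ of $B\sE$ using that $B\bS$ is grouplike, use the bipermutative structures of Remark~\ref{rmk:S-biperm} for the $E_\infty$ ring structure, and check that the map is an isomorphism on $\pi_0 = \bZ$ and $\pi_1 = \bZ/2$. The only difference is that you spell out the $\pi_1$ verification (Barratt--Priddy--Quillen, the group completion theorem, and the identification of the sign homomorphism with abelianization of $\Sigma_n$), which the paper simply asserts; this is a correct and welcome elaboration, not a different argument.
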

\begin{proof}
  By Remark~\ref {rmk:S-biperm}, we have a map of $E_{\infty}$ ring spaces
  $B\xi : B\sE \to B\bS$. Since $B\bS$ is group-like, this map factors
  through the group completion of $B\sE$, which is equivalent to
  $QS^0$:
  \[
  \xym{
    B\sE \ar[r]^{B\xi} \ar[d] & B\bS.\\
    QS^0 \ar[ru]_{\overline{B\xi}} &
  }
  \]
  The map $\overline{B\xi}$ is an isomorphism on $\pi_0 = \bZ$,
  $\pi_1 = \bZ/2$, and thus it is the Postnikov
  1-truncation.
\end{proof}

Let $(\sC, \oplus, I)$ be any Picard groupoid. By
Theorem~\ref {prop:skel-strictification}, we can assume without loss of
generality that $\sC$ is both skeletal and permutative.  Then each object
$x$ in $\sC$ has a strict inverse, $x^*$, so that $x \oplus x^* = I =
x^* \oplus x$.  There is a natural action of $\bS$ on $\sC$
\[
\bS \times \sC \fto{\cdot} \sC
\]
defined on objects as follows:
\begin{align*}
  0 \times x & \mapsto I\\
  1 \times x & \mapsto x \\
  n \times x & \mapsto ((n-1) \cdot x) \oplus x & \text{ for } n > 1\ \\
  n \times x & \mapsto |n| \cdot x^{\ast} & \text{ for } n < 0.
  \intertext{Let $c$ denote the symmetry of $\sC$.  The action $\cdot$
    on morphisms is defined by:}
  \eta_2 \times 1_x & \mapsto c(x,x), & \\
  \eta_n \times 1_x & \mapsto c(x,x) \oplus 1_{(n-2) \cdot x} &
  \text{ for } n \neq 2.
\end{align*}

\begin{prop}\label{prop:model-eta-action}
  Let $X$ be a stable one-type modeled by a Picard groupoid
  $\sC$, so $B\sC \hty X$.  Then the action of the truncated sphere
  spectrum on $X$ is modeled by the action of $\bS$ on $\sC$.
\end{prop}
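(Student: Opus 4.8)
The plan is to show that applying $B$ to the functor $\bS \times \sC \fto{\cdot} \sC$ recovers, up to equivalence, the canonical module structure of $X \hty B\sC$ over the truncated sphere spectrum $B\bS$ (\fref{prop:model-trunc-sphere}). The action being modeled exists because the sphere spectrum is the unit for the smash product, so every connective spectrum is canonically a module over it; as truncation is lax monoidal, the truncated sphere acts on the truncation $X$. On the algebraic side, by \fref{prop:free-symm-Pic-cat} the groupoid $\bS$ is the free Picard groupoid on one object, hence the unit for the tensor product of Picard groupoids, so $\cdot$ is the algebraic shadow of this module-over-the-unit structure. I would first record the homotopy-level description of the topological action: on homotopy groups it is integer scaling on $\pi_0 X$ and $\pi_1 X$ together with multiplication by $\eta \in \pi_1 \bS \iso \bZ/2$, a map $\pi_0 X \to \pi_1 X$ which, by the identification of the stable $k$-invariant recorded before \fref{cor:model-postnikov-invar}, is exactly the quadratic map $q = c \circ \De$ determined by the symmetry of $\sC$.

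Next I would compute the effect of the algebraic action on homotopy groups and match it with the above. On objects, $n \cdot x = x^{\oplus n}$ (with inverses for $n<0$) realizes the integer scaling on $\pi_0 \sC$, and the formula on endomorphisms gives the scaling on $\pi_1 \sC$. The essential cross term is $\eta_2 \times 1_x \mapsto c(x,x)$: the generator of $\pi_1 \bS$ paired with the class $[x] \in \pi_0 \sC$ produces $c(x,x) = q([x]) \in \pi_1\sC$. Thus the algebraic and topological actions agree on all homotopy groups and, crucially, on the distinguished map $\pi_0 \to \pi_1$ implementing multiplication by $\eta$.

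Finally I would promote this homotopy-level agreement to an equivalence of actions using the classification. Because $\sC$ is skeletal and permutative (\fref{prop:skel-strictification}), both actions are represented by symmetric monoidal functors into $\sC$, and under the equivalence of homotopy categories of \fref{thm:stable-1-type-symm-picard-equiv} it suffices to show these functors agree in $\Ho{\sympic}$. A symmetric monoidal functor out of a skeletal Picard groupoid is pinned down by its values on objects and on endomorphism generators together with its compatibility with the symmetry; since the algebraic and topological actions agree on exactly this data by the previous paragraph, they are monoidally naturally isomorphic, hence equal as morphisms in $\Ho{\sympic}$, which is the claim.

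The main obstacle is this last step. The action map $B\bS \times B\sC \to B\sC$ is bilinear and so factors through the smash $B\bS \wedge B\sC$, which is a priori a stable \emph{two}-type---its $\pi_2$ receiving a contribution from $\pi_1\bS \otimes \pi_1\sC$---so one cannot directly classify it by one-type data. The resolution is that the action lands in the one-type $B\sC$ and therefore factors through the one-truncation $\tau_{\le 1}(B\bS \wedge B\sC)$, which is again a stable one-type classified by \fref{thm:stable-1-type-symm-picard-equiv}; the comparison then takes place entirely among maps of one-types. A secondary, routine task is to check that the formulas $\eta_n \times 1_x \mapsto c(x,x) \oplus 1_{(n-2)\cdot x}$ assemble into a well-defined symmetric monoidal action, i.e.\ to verify the associativity and unit coherences; this is where the combinatorics of the symmetry $c$ enter, but it presents no conceptual difficulty.
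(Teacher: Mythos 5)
Your route is genuinely different from the paper's main proof: the paper acts with the category $\sE$ of finite sets on $\sC$, applies $B$, and then exhibits an explicit factorization $B\sE \times B\sC \to QS^0 \times B\sC \to B\bS \times B\sC \to B\sC$, where the first factorization uses that $B\sC$ is group complete and the second uses that the alternating groups act trivially after $\pi_1$ is abelianized. This identifies $B$ of the categorical action with the restriction of the canonical $QS^0$-action with no need to classify maps. What you propose is essentially the ``alternate argument'' the paper itself sketches in the remark immediately following the proposition: reduce everything to the $\eta$-multiplication $\pi_0 \to \pi_1$ and identify it with the quadratic map $q(x) = c(x,x)$.

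The gap is in your promotion step. You claim that a symmetric monoidal functor out of a skeletal Picard groupoid is ``pinned down by its values on objects and on endomorphism generators together with its compatibility with the symmetry,'' and hence that agreement of the two actions on homotopy groups and on the $\eta$-cross-term forces a monoidal natural isomorphism. This is false as stated: a symmetric monoidal functor also carries its monoidal constraint $F(x) \oplus F(y) \to F(x \oplus y)$, and two functors agreeing on objects and morphisms need not be isomorphic. Concretely, $[H\bZ/2, \Sigma H\bZ/2] \iso \bZ/2$, generated by $\mathrm{Sq}^1$, so there is a nontrivial map of stable one-types inducing zero on all homotopy groups; in Picard-groupoid terms it is a symmetric monoidal functor $\sT(\bZ/2,0,0) \to \sT(0,\bZ/2,0)$ that is trivial on objects and morphisms but has a nontrivial constraint. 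So matching $\pi_*$-data (even including the quadratic map) of two maps $\tau_{\le 1}(B\bS \wedge B\sC) \to B\sC$ does not by itself identify them. To close the gap you would either need to invoke uniqueness of \emph{unital} module structures over the unit (which you mention as motivation but do not use, and which would require verifying the coherence you defer as ``routine''), or fall back on an explicit factorization as in the paper's proof.
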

\begin{proof}
  The action of $\bS$ on $\sC$ passes to an action of $B\bS$ on $B\sC$
  which is homotopic to that of the group completion $QS^0$:
  The top triangle in the diagram below commutes because $B\sC$ is
  group complete; the bottom commutes because the group completion
  abelianizes $\pi_1$ and hence the action of even permutations (the
  alternating group) is trivial.
  \[\xym@C=3cm{
    B\sE \times B\sC \ar[r] \ar[d] & B\sC\\
    QS^0 \times B\sC\hspace{3pt} \ar[ur] \ar[d] & \\
    B\bS\,\, \times B\sC \ar[ruu] & \\
  }\]
\end{proof}
\begin{rmk}
  An alternate argument for Proposition~\ref {prop:model-eta-action} notes that
  the action of the truncated sphere spectrum on $B\sC$ determines and
  is determined by the unique nontrivial Postnikov invariant
  \[
  K(\pi_0 B\sC, 0) \fto{k_0} K(\pi_1 B\sC, 2)
  \]
  which is given by precomposition with $\eta$.  The discussion preceding
  Corollary~\ref {cor:model-postnikov-invar} shows that this Postnikov invariant
  is modeled by the stable quadratic map $q \cn \pi_0\sC \to \pi_1\sC$
  given by $q(x) = c(x,x)$.  This, in turn, determines and is
  determined by the action of $\bS$ on $\sC$ since $\eta_2$ acts by
  the symmetry $c$.  In Section~\ref {sec:model-Postnikov-invars} we define
  the Postnikov invariant of a Picard groupoid and show that
  it models the Postnikov invariant of $B\sC$
  (Corollary~\ref {cor:Postnikov-invars}).

\end{rmk}


\section{Cokernels of Picard groupoid maps}
\label{sec:cokernels}

Here we describe the cokernel of a map of Picard groupoids and the
resulting exact sequence in homotopy groups.


\begin{defn}[Bigroupoid]
  A \emph{bigroupoid} is a bicategory $\sG$ in which the 1-cells
  are invertible up to 2-isomorphism and the 2-cells are isomorphisms.  The set $\pi_0
  \sG$ is given by the equivalence classes of objects.  For an object
  $x \in \sG$, the group $\pi_1(\sG,x)$ is given by the isomorphism
  classes of 1-endomorphisms of $x$. The group $\pi_2(\sG, x)$ is
  given by the 2-endomorphisms of $1_x$, the identity 1-cell of $x$.
\end{defn}

\begin{defn}[Cokernel {\cite{vitale2002pbe}}]
  Let $F \cn \sC \to \sD$ be a map of Picard groupoids.  The cokernel
  of $F$ is a bigroupoid $\Coker(F)$ defined as follows: The objects
  of $\Coker(F)$ are the objects of $\sD$.  The 1-cells between objects
  $x$ and $y$ are pairs $(f,n)$, where
  \[
  x \fto{f} y \oplus F(n)
  \]
  is a morphism of $\sD$.  The 2-cells between $(f,n)$ and $(f',n')$
  are given by morphisms $\alpha \cn n \to n'$ of $\sC$ such that the
  following diagram commutes:
  \[\xym{
    & x \ar[dl]_-{f} \ar[dr]^-{f'} & \\
    y \oplus F(n) \ar[rr]_-{1 \oplus F(\alpha)} & & y \oplus F(n')
  }\]
  The composite of two 1-cells
  \[
    (f,n) \cn  x \to y \qquad \text{and} \qquad
    (g,m) \cn  y \to z
  \]
  is given by the following composite morphism in $\sD$:
  \[
  x \fto{f} y \oplus F(n) \fto{g \oplus 1} (z \oplus F(m)) \oplus
  F(n) \to z \oplus F(m \oplus n).
  \]
  Further details of the definition can be found in
  \cite[\S2]{vitale2002pbe}; note that the cokernel is denoted
  $\mathrm{Cok\,}(F)$ there.
\end{defn}

There is a natural pseudofunctor $C_F: \sD \to \Coker(F)$ which is the identity
on objects and which takes a morphism $f\cn x \to y$ to the 1-cell
$\widehat{f} = (f,I_\sC)$
determined by the morphism
\[
x \fto{f} y \to y \oplus I_{\sD} \to y \oplus F(I_{\sC}).
\]

\begin{thm}\label{thm:cok-sym-mon}
  The symmetric monoidal structure on $\sD$ induces a symmetric
  monoidal structure on the bicategory $\Coker(F)$. The pseudofunctor
  $C_F$ is symmetric monoidal.
\end{thm}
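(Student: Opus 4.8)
The plan is to transport every piece of the symmetric monoidal structure on $\sD$ through the construction of $\Coker(F)$, checking that the symmetric monoidal functoriality of $F$ supplies exactly the coherence data needed on the $\sC$-components of the $1$-cells. First I would define the monoidal product $\otimes$. On objects it is $x \otimes y = x \oplus y$, and on $2$-cells a pair $\alpha \cn n \to n'$, $\beta \cn m \to m'$ is sent to $\alpha \oplus \beta \cn n \oplus m \to n' \oplus m'$. On $1$-cells $(f,n) \cn x \to x'$ and $(g,m) \cn y \to y'$ I set the underlying $\sD$-morphism of the product to be $f \oplus g \cn x \oplus y \to (x' \oplus F(n)) \oplus (y' \oplus F(m))$, reassociated and commuted using the symmetry of $\sD$ to land in $(x' \oplus y') \oplus (F(n) \oplus F(m))$, and then composed with the monoidal constraint $F(n) \oplus F(m) \fto{\iso} F(n \oplus m)$; thus $(f,n) \otimes (g,m)$ is a $1$-cell with $\sC$-component $n \oplus m$. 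The compositor and unitor constraints making $\otimes$ a pseudofunctor are built from the symmetry of $\sC$ (to interchange $\sC$-components) together with naturality of the symmetry of $\sD$ and of the constraint of $F$. The unit object is $I_\sD$.

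Next I would produce the associativity, unit, and braiding adjoint equivalences. Each is an equivalence $1$-cell of the form $(s, I_\sC)$, where $s$ is the corresponding structure isomorphism of $\sD$ (the associator, a unitor, or the symmetry) postcomposed with the canonical inclusion $y \to y \oplus I_\sD \to y \oplus F(I_\sC)$; the relevant pseudonaturality is inherited from naturality of these isomorphisms in $\sD$ and from the fact that a $2$-cell of $\Coker(F)$ is a morphism of $\sC$ whose defining triangle commutes in $\sD$. The higher coherence data of a symmetric monoidal bicategory---the pentagonator, the two unit $2$-cells, the two hexagon modifications, and the syllepsis---are likewise obtained from the coherence isomorphisms of $\sD$ as a symmetric monoidal category and from the monoidal constraint of $F$.

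The main obstacle is the verification of the coherence axioms (those of Gordon--Power--Street for the monoidal structure, together with the braiding, syllepsis, and symmetry axioms). The key simplification is that every $2$-cell of $\Coker(F)$ is recorded by a morphism in $\sC$, so each coherence axiom is an equation of morphisms in $\sC$; the commuting-triangle condition then reduces it to an instance of coherence that already holds in $\sD$ (as a symmetric monoidal category) and in $F$ (as a symmetric monoidal functor), with no new relation required. To cut down the bookkeeping I would first invoke Theorem~\ref{prop:skel-strictification} to replace $\sD$ by an equivalent skeletal, permutative Picard groupoid, so that the associator and unitors of $\sD$ become identities and only its symmetry and the constraint of $F$ remain nontrivial; the structure is then transported back along the induced biequivalence of cokernels. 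Finally, for $C_F$ I would check that it is symmetric monoidal: it is the identity on objects, and its monoidal and braiding constraints are the images under $C_F$ of the structure isomorphisms of $\sD$, for which the symmetric monoidal pseudofunctor axioms follow immediately from those of $\sD$.
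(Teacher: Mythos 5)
Your proposal is correct in outline but takes a genuinely different route from the paper. You build the symmetric monoidal bicategory structure on $\Coker(F)$ by hand---tensor pseudofunctor, associativity/unit/braiding equivalences of the form $(s,I_\sC)$, pentagonator, hexagon modifications, syllepsis---and then verify the coherence axioms directly. The paper instead constructs a symmetric monoidal \emph{double category} $\CCoker(F)$ whose horizontal bicategory is $\Coker(F)$: there the coherence is essentially automatic (every diagram is a diagram of morphisms in the symmetric monoidal categories $\sC$ and $\sD$), one checks that every vertical morphism has a companion and a conjoint, and Shulman's lifting theorem \cite{Shulman2010} then transfers the structure to $\Coker(F)$. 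What the paper's route buys is precisely that one never has to write down, let alone verify, the long list of axioms for a symmetric monoidal bicategory; that verification is exactly where the bulk of the work in your approach lies, and your one-sentence reduction of each axiom to prior coherence is believable but is the whole content of the theorem. Note also that the pasting equalities are really instances of coherence in $\sC$ (the coherence 2-cells are structural morphisms of $\sC$ between sums of copies of $I_\sC$ and of the $\sC$-components), with $\sD$ and $F$ entering only to check that these morphisms are valid 2-cells; your phrasing attributes this mostly to $\sD$. Two further caveats: strictifying $\sD$ via Theorem~\ref{prop:skel-strictification} helps less than you suggest, since the symmetry cannot be strictified and the associator of $\sC$ (not only $\sD$) governs horizontal composition, and transporting a symmetric monoidal bicategory structure across the induced biequivalence of cokernels is itself a nontrivial fact you would need to justify. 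Your treatment of $C_F$ agrees with the paper's: the constraints are identity 1-cells with 2-cell data given by the unique structural morphisms between products of copies of $I_\sC$.
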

We prove Theorem~\ref {thm:cok-sym-mon} in Section~\ref {sec:proof-sym-cok}.  In the
remainder of this section we apply this cokernel to model stable
cofibers and Postnikov invariants.

\begin{thm}\label{thm:hty-exact-seq}
  A map of Picard groupoids $F \cn \sC \to \sD$ gives rise to a long
  exact sequence of homotopy groups between $\sC$, $\sD$, and $\Coker(F)$
  \[
  0 \to \pi_2 \Coker(F) \to \pi_1 \sC \to \pi_1 \sD
  \to \pi_1 \Coker(F) \to \pi_0 \sC \to \pi_0 \sD \to \pi_0 \Coker(F) \to 0.
  \]
\end{thm}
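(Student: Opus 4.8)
The plan is to compute each homotopy group of $\Coker(F)$ explicitly in terms of the induced maps $\pi_0 F$ and $\pi_1 F$, to identify the six maps of the sequence with the resulting inclusions, quotients, and one connecting homomorphism, and then to verify exactness one spot at a time. Throughout I base the homotopy groups of $\Coker(F)$ at the unit object $I_\sD$, using the definition of the homotopy groups of a bigroupoid: the group structures on $\pi_0$, $\pi_1$, $\pi_2$ come respectively from $\oplus$ on objects (inherited from $\sD$), composition of $1$-endomorphisms, and vertical composition of $2$-endomorphisms; these are abelian as homotopy groups of the corresponding grouplike $E_\infty$ objects.

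First I would pin down the two outer terms. Since a $1$-cell $(f,n)\cn x\to y$ requires an isomorphism $x\iso y\oplus F(n)$ in $\sD$, two objects become equivalent in $\Coker(F)$ exactly when their classes differ by an element in the image of $\pi_0 F$; hence $\pi_0\Coker(F)\iso\Coker(\pi_0 F)$, with $\pi_0\sD\to\pi_0\Coker(F)$ the quotient induced by $C_F$. At the other end, the identity $1$-cell of $I_\sD$ is $(\iota,I_\sC)$ for the canonical isomorphism $\iota\cn I_\sD\to I_\sD\oplus F(I_\sC)$, and a $2$-endomorphism is a morphism $\alpha\cn I_\sC\to I_\sC$ with $(1\oplus F(\alpha))\iota=\iota$; since $1_{I_\sD}\oplus(-)$ is injective on automorphisms in a Picard groupoid, this forces $\pi_1 F(\alpha)=0$. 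Thus $\pi_2\Coker(F)\iso\Ker(\pi_1 F)$, and the first map of the sequence is the resulting inclusion into $\pi_1\sC$.

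The crux is the middle term $\pi_1\Coker(F)$. A $1$-endomorphism of $I_\sD$ is a pair $(f,n)$ with $f\cn I_\sD\to I_\sD\oplus F(n)$ an isomorphism, which forces $\pi_0 F[n]=0$. I would define $\Phi\cn\pi_1\Coker(F)\to\pi_0\sC$ by $[(f,n)]\mapsto[n]$; this is well defined because a $2$-isomorphism $\alpha\cn n\to n'$ entails $[n]=[n']$, and it is a homomorphism because the composite of $(f,n)$ with $(g,m)$ has $\sC$-component $m\oplus n$. Its image is exactly $\Ker(\pi_0 F)$. For the kernel, if $[n]=0$ one may take $n=I_\sC$, so $(f,I_\sC)$ is recorded by $f\in\pi_1\sD$ under $I_\sD\oplus F(I_\sC)\iso I_\sD$, and the $2$-isomorphism relation identifies $f$ with $f+\pi_1 F(\alpha)$; hence $\Ker\Phi\iso\Coker(\pi_1 F)$, and this kernel is precisely the image of $C_F\cn\pi_1\sD\to\pi_1\Coker(F)$. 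This produces the short exact sequence
\[
0\to\Coker(\pi_1 F)\to\pi_1\Coker(F)\xrightarrow{\Phi}\Ker(\pi_0 F)\to 0,
\]
which is the heart of the argument.

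With these identifications the six internal maps are the inclusion $\Ker(\pi_1 F)\hookrightarrow\pi_1\sC$, the map $\pi_1 F$, the composite $\pi_1\sD\twoheadrightarrow\Coker(\pi_1 F)\hookrightarrow\pi_1\Coker(F)$, the connecting map $\Phi$, the map $\pi_0 F$, and the quotient $\pi_0\sD\twoheadrightarrow\Coker(\pi_0 F)$. Exactness at $\pi_1\sC$ and at $\pi_0\sD$ is the statement that kernel equals image for $\pi_1 F$ and $\pi_0 F$; exactness at the two spots flanking $\pi_1\Coker(F)$ is exactly the short exact sequence above; injectivity at $\pi_2\Coker(F)$ and surjectivity at $\pi_0\Coker(F)$ are immediate. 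I expect the main obstacle to be the bookkeeping in the computation of $\pi_1\Coker(F)$---in particular tracking the monoidal unit constraints, so that $F(I_\sC)$ is only canonically isomorphic to $I_\sD$, and checking that $\Phi$ is a well-defined homomorphism for the composition of $1$-cells---rather than any individual exactness verification.
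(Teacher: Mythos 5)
Your proposal is correct, but it takes a genuinely different route from the paper. The paper's proof is essentially a citation: exactness at most positions is deferred to Vitale's 2-exact sequence for the kernel $\Ker(F)$ (using the identifications $\pi_0\Ker(F)\iso\pi_1\Coker(F)$ and $\pi_1\Ker(F)\iso\pi_2\Coker(F)$), and only exactness at $\pi_1\sD$ and at $\pi_1\Coker(F)$ is checked by hand, by observing that a $1$-cell $(f,I_\sC)$ is trivialized in $\Coker(F)$ exactly when $f$ factors through a morphism in the image of $F$. You instead compute all three homotopy groups of $\Coker(F)$ from scratch --- $\pi_0\Coker(F)\iso\Coker(\pi_0F)$, $\pi_2\Coker(F)\iso\Ker(\pi_1F)$, and the extension $0\to\Coker(\pi_1F)\to\pi_1\Coker(F)\to\Ker(\pi_0F)\to 0$ via the connecting map $\Phi([(f,n)])=[n]$ --- and read off exactness at every spot. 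Your argument is self-contained where the paper's is not, and it yields strictly more information (the explicit description of $\pi_1\Coker(F)$ as an extension, which the paper only obtains implicitly through Vitale); the cost is the bookkeeping you yourself flag, namely tracking the unit constraint $F(I_\sC)\iso I_\sD$ when identifying $2$-endomorphisms of $\widehat{1}_{I_\sD}$ with $\Ker(\pi_1F)$ and when checking that $\Phi$ is a well-defined homomorphism under the composition $(f,n)\odot(g,m)\mapsto(\,\cdot\,,m\oplus n)$. Both arguments are sound; yours would serve as a complete replacement for the paper's proof without the external reference.
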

\begin{proof}
  Exactness at most positions is verified by \cite{vitale2002pbe},
  noting that the $\Ker(F)$ used there has $\pi_0 \Ker(F) \iso \pi_1
  \Coker(F)$ and $\pi_1 \Ker(F) \iso \pi_2 \Coker(F)$.  Exactness at
  the remaining positions, $\pi_1 \sD$ and at $\pi_1 \Coker(F)$, is
  straightforward from the definitions: An element in $\pi_1 \sC$ is
  represented by a morphism $f:I_{\sC} \to I_{\sC}$.  The image of
  this element in $\pi_1 \sD$ is represented by the composite
  \[
  I_{\sD} \fto{\iso} F(I_{\sC}) \fto{F(f)} F(I_{\sC}) \fto{\iso} I_{\sD}.
  \]
  This composite morphism maps to the trivial element in $\pi_1
  \Coker(F)$ because it factors through a morphism in the image of $F$
  (namely, $F(f)$).  Likewise, if $g:F(X) \to I_{\sD}$ represents an
  element of $\pi_1 \sD$ whose image in $\pi_1 \Coker(F)$ is trivial
  (factors through a morphism in the image of $F$), then the
  trivialization provides an element of $\pi_1 \sC$ whose image in
  $\pi_1 \sD$ is the element represented by $g$.  Exactness at $\pi_1
  \Coker(F)$ is similar, and left to the reader.
\end{proof}

\ifbool{extrarefs}{
The two previous results, together with \cite{Oso10Spectra, GO} show that
the cokernel of Picard groupoids models the cofiber of stable
one-types:
\begin{cor}\label{cor:coker-and-hocofib}
  Let $F:\sC \to \sD$ be a map of Picard groupoids.  Then the
  following is a cofibration sequence of group-like $E_\infty$ spaces:
  \[
  B\sC \to B\sD \to B\Coker(F).
  \]
\end{cor}
\begin{proof}
  Note that since $\Coker(F)$ is a group-like symmetric monoidal
  bicategory, \cite[Theorem 2.1]{Oso10Spectra} and the improved results of \cite{GO} imply that
  $B\Coker(F)$ is a group-like $E_\infty$ space.  Let $C$ be the
  cofiber of the map on classifying spaces.  Then the dashed arrow to
  $B\Coker(F)$ exists by the universal property of $C$, and it is an
  equivalence by Theorem~\ref {thm:hty-exact-seq}.
    \[\xym{
    B\sC \ar[r]
    & B\sD \ar[r] \ar[dr] 
    & C \ar@{-->}[d]^-{\hty} \\
      & 
    & B\Coker(F) 
  }\]
}
{
The two previous results, together with \cite{Oso10Spectra} show that
the cokernel of Picard groupoids models the cofiber of stable
one-types:
\begin{cor}\label{cor:coker-and-hocofib}
  Let $F:\sC \to \sD$ be a map of Picard groupoids.  Then the
  following is a cofibration sequence of group-like $E_\infty$ spaces:
  \[
  B\sC \to B\sD \to B\Coker(F).
  \]
\end{cor}
\begin{proof}
  Note that since $\Coker(F)$ is a group-like symmetric monoidal
  bicategory, \cite[Theorem 2.1]{Oso10Spectra} implies that
  $B\Coker(F)$ is a group-like $E_\infty$ space.  Let $C$ be the
  cofiber of the map on classifying spaces.  Then the dashed arrow to
  $B\Coker(F)$ exists by the universal property of $C$, and it is an
  equivalence by Theorem~\ref {thm:hty-exact-seq}.
    \[\xym{
    B\sC \ar[r]
    & B\sD \ar[r] \ar[dr] 
    & C \ar@{-->}[d]^-{\hty} \\
      & 
    & B\Coker(F) 
  }\]
}
\end{proof}

\subsection{Modeling Postnikov invariants}
\label{sec:model-Postnikov-invars}

\begin{defn}
  Let $\sC$ be a Picard groupoid, and let $\sC_0$ be the category of
  isomorphism classes of objects of $\sC$, with only identity
  morphisms.  Let
  \[
  \al_0 : \sC \to \sC_0
  \]
  be the monoidal functor which takes each object to its isomorphism
  class and takes morphisms to identity morphisms.  Let
  $k_0 = C_{\al_0}$ be the natural pseudofunctor from $\sC_0$ to
  $\Coker(\al_0)$.  We call the sequence
  \[
  \sC \fto{\al_0} \sC_0 \fto{k_0} \Coker(\al_0)
  \]
  the \emph{Postnikov tower} of $\sC$.
\end{defn}

By Theorem~\ref {thm:hty-exact-seq}, $\Coker(\al_0)$ has only one non-trivial
homotopy group, which is $\pi_1 \sC$ in degree two.  We refer to $k_0$
as the Postnikov invariant of $\sC$.  Our terminology is motivated by
the following result.
\begin{samepage}
\begin{cor}\label{cor:Postnikov-invars}
  The Postnikov tower of $\sC$ models the Postnikov tower of $B\sC$.
\end{cor}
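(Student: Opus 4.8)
The plan is to recognize the sequence
\[
\sC \fto{\al_0} \sC_0 \fto{k_0} \Coker(\al_0)
\]
as the Postnikov tower of $B\sC$ by passing to classifying spaces and invoking the cofiber sequence already established in \cref{cor:coker-and-hocofib}. Applying that corollary to $F = \al_0$ produces a cofibration sequence of grouplike $E_\infty$ spaces
\[
B\sC \fto{B\al_0} B\sC_0 \fto{BC_{\al_0}} B\Coker(\al_0),
\]
which, under the equivalence of \cref{thm:stable-1-type-symm-picard-equiv} between grouplike $E_\infty$ one-types and their associated connective spectra, becomes a cofiber sequence of spectra. Since a cofiber sequence of spectra is also a fiber sequence, it will suffice to identify the two right-hand terms as Eilenberg--Mac Lane spectra and to check that the first map is the zeroth Postnikov section.

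First I would pin down the homotopy types of the last two terms. The groupoid $\sC_0$ is discrete, with abelian group of objects $\pi_0\sC$ and only identity morphisms, so $B\sC_0$ is the discrete grouplike $E_\infty$ space $\pi_0\sC$; its associated spectrum is $H\pi_0\sC$ and $B\sC_0 \hty K(\pi_0\sC, 0)$. For the cokernel I would read off the long exact sequence of \cref{thm:hty-exact-seq} applied to $\al_0$: here $\pi_1\sC_0 = 0$ and $\al_0$ induces an isomorphism $\pi_0\sC \to \pi_0\sC_0$, which forces $\pi_0\Coker(\al_0) = \pi_1\Coker(\al_0) = 0$ and $\pi_2\Coker(\al_0) \iso \pi_1\sC$. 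Hence $B\Coker(\al_0)$ is a grouplike $E_\infty$ space with a single nontrivial homotopy group in degree two, so $B\Coker(\al_0) \hty K(\pi_1\sC, 2)$, with associated spectrum $\Sigma^2 H\pi_1\sC$.

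It then remains to recognize the resulting fiber sequence $B\sC \to K(\pi_0\sC,0) \to K(\pi_1\sC,2)$ as the Postnikov tower of $B\sC$, reproducing the diagram of \textup{Theorem~A}\,\textit{(iii)}. By construction $B\al_0$ is an isomorphism on $\pi_0$ and zero on $\pi_1$, and its target is $0$-truncated. The universal property of the zeroth Postnikov truncation---any map from $B\sC$ to a $0$-truncated spectrum factors uniquely through the section $B\sC \to \tau_{\le 0}B\sC = H\pi_0\sC$---then forces $B\al_0$ to agree up to equivalence with the zeroth Postnikov section; the cofiber of $B\al_0$ is accordingly $\Sigma\tau_{\ge 1}B\sC = \Sigma^2 H\pi_1\sC$, and the connecting map $BC_{\al_0}$ is the first $k$-invariant. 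I expect this last recognition step to be the main obstacle: matching homotopy groups alone does not pin down a Postnikov tower, since distinct $k$-invariants can share the same homotopy groups. The essential point to get right is that, because the middle term is $0$-truncated and $B\al_0$ is an isomorphism on $\pi_0$, the map is \emph{forced} to be the truncation, so that $BC_{\al_0}$ realizes the genuine $k$-invariant of $B\sC$ rather than merely some map with the correct source and target.
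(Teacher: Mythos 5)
Your proposal is correct and takes essentially the same route as the paper: apply \cref{cor:coker-and-hocofib} to $\al_0$, use $B\sC_0 \hty K(\pi_0,0)$ and the homotopy-group computation from \cref{thm:hty-exact-seq} to identify the other terms. The paper states this in one line; your added recognition step (that $B\al_0$ is forced to be the Postnikov truncation because its target is $0$-truncated and it is a $\pi_0$-isomorphism) is a worthwhile explicit justification of what the paper treats as immediate.
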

\begin{proof}
  This follows immediately from Corollary~\ref {cor:coker-and-hocofib} and the
  fact that $B\sC_0 \hty K(\pi_0, 0)$.
  \[\xym{
    B\sC \ar[r] \ar@{=}[d] 
    & K(\pi_0, 0) \ar[r] \ar[d]^-{\hty} 
    & K(\pi_1, 2) \ar@{-->}[d]^-{\hty} \\
    B\sC \ar[r]^-{B\al_0}
    & B\sC_0 \ar[r]^-{Bk_0} 
    & B\Coker(\al_0) 
  }\]
\end{proof}
  
\end{samepage}



\section{Proof of Theorem \ref{thm:cok-sym-mon}}\label{sec:proof-sym-cok}

\begin{prop}\label{prop:coker-symm-mon}
  The bicategory $\Coker(F)$ is symmetric monoidal.
\end{prop}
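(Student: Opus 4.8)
The plan is to lift the symmetric monoidal structure $(\oplus, I)$ of $\sD$ to $\Coker(F)$, using the monoidal structure of $F$ and the symmetry of $\sC$ to handle the $\sC$-coordinate that appears in the $1$- and $2$-cells. First I would define the tensor product pseudofunctor $\boxtimes \cn \Coker(F)\times\Coker(F)\to\Coker(F)$. On objects it is $\oplus$. Given $1$-cells $(f,n)\cn x\to x'$ and $(g,m)\cn y\to y'$, I would set $(f,n)\boxtimes(g,m)=(h,\,n\oplus m)$, where $h\cn x\oplus y\to (x'\oplus y')\oplus F(n\oplus m)$ is obtained from $f\oplus g$ by applying the middle-four interchange of $\sD$ (assembled from its associator and symmetry) to rearrange $(x'\oplus F(n))\oplus(y'\oplus F(m))$ as $(x'\oplus y')\oplus(F(n)\oplus F(m))$, followed by the monoidal constraint $F(n)\oplus F(m)\to F(n\oplus m)$. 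On $2$-cells $\alpha\cn n\to n'$ and $\beta\cn m\to m'$ the tensor is $\alpha\oplus\beta$.

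Next I would supply the remaining monoidal data. The compositor and unit isomorphisms making $\boxtimes$ a pseudofunctor are assembled from the coherence isomorphisms of $\sD$ together with the associativity and unitality axioms of the monoidal functor $F$. The associativity and unit constraints of $\Coker(F)$ are pseudonatural adjoint equivalences whose $1$-cell components have the form $(\phi, I)$, with $\phi$ the corresponding constraint of $\sD$, and whose modification data again come from coherence in $\sD$ and the functor constraints of $F$. For the braiding I would take the adjoint equivalence whose component $x\oplus y\to y\oplus x$ is $(c^{\sD}_{x,y}, I)$. The essential subtlety is that tensoring $1$-cells produces the object $n\oplus m$ in the $\sC$-coordinate, so the naturality $2$-cells of the braiding are supplied by the symmetry $c^{\sC}_{n,m}\cn n\oplus m\to m\oplus n$ of $\sC$; the syllepsis and the higher symmetry data then reduce to coherence $2$-cells of $\sD$ and $\sC$.

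Finally, I would verify the axioms of a symmetric monoidal bicategory --- the pentagonator and unit coherences, the two hexagonators and their four axioms, and the syllepsis axioms. The main obstacle is simply the number of these diagrams, so the point of the argument is a decoupling principle: every structural $1$-cell of $\Coker(F)$ has the form $(\phi, I)$ for a constraint $\phi$ of $\sD$, and every structural $2$-cell is a morphism of $\sC$ built from the symmetry and associativity of $\sC$ and the constraints of $F$. Consequently each axiom, which asserts the equality of two pasting composites of $2$-cells, reduces to an equality of morphisms in $\sC$; this holds by coherence for the symmetric monoidal category $\sC$ together with the monoidal-functor axioms for $F$, while the compatibility of the underlying source and target $1$-cells in $\sD$ holds by coherence for $\sD$. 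Establishing this decoupling carefully, and thereby reducing the full $2$-categorical coherence to the classical coherence of the two Picard groupoids $\sC$ and $\sD$, is what I expect to be the heart of the proof.
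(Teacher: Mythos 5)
Your direct construction is viable, and the data you identify---the tensor of $1$-cells via the middle-four interchange of $\sD$ and the monoidal constraint of $F$, braiding components of the form $(c^{\sD}_{x,y}, I)$ with naturality $2$-cells supplied by the symmetry of $\sC$---agree with what the structure must be. The paper, however, deliberately avoids this route. It instead constructs a symmetric monoidal \emph{double category} $\CCoker(F)$ whose horizontal bicategory is $\Coker(F)$: the category of objects is $\sD$, the category of horizontal morphisms has objects $(x,y,f,n)$ with $f \cn x \to y \oplus F(n)$, and the monoidal structures on these are ordinary $1$-categorical symmetric monoidal structures, so the only coherence to verify is the commutativity of diagrams of structural morphisms of $\sC$ and $\sD$---exactly the ``decoupled'' coherence you isolate, but in a setting where no pentagonator, hexagonator, or syllepsis data ever has to be written down. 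The paper then checks that every morphism of $\sD$ has a companion and a conjoint, so $\CCoker(F)$ is fibrant, and invokes Shulman's lifting theorem \cite[Thm 5.1]{Shulman2010} to transport the symmetric monoidal structure to the horizontal bicategory. Your approach buys an explicit, self-contained description of all the bicategorical constraint data; the paper's approach buys the outsourcing of your entire third paragraph---the part you rightly flag as the main obstacle---to a general theorem. If you do pursue the direct route, the decoupling principle needs more care than stated: coherence for symmetric monoidal categories only guarantees commutativity of diagrams whose legs induce the same permutation, the diagrams here also involve the constraints of $F$ (so you need the corresponding coherence statement for symmetric monoidal functors), and each asserted equality of $2$-cells must be accompanied by a verification that the morphisms of $\sC$ in question really are $2$-cells of $\Coker(F)$, i.e., that the associated triangles over $\sD$ commute.
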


To prove this proposition we will construct a double category
$\CCoker(F)$ and use the results of \cite{Shulman2010}, which are
analogous to those of \cite[\S 6]{GG2008lowTricats}.  The idea behind
this method is that it is usually easier to construct symmetric
monoidal double categories than symmetric monoidal bicategories, and
for certain double categories the symmetric monoidal structure lifts
to a symmetric monoidal structure in a related bicategory.

The double category $\CCoker(F)$ is constructed as follows. The category of
objects, $\CCoker(F)_0$, is $\sD$. The category of morphisms,
$\CCoker(F)_1$, has as objects the quadruples $(x,y,f,n)$, where $x$
and $y$ are objects of $\sD$, $n$ is an object of $\sC$ and $f:x\to
y\oplus F(n)$ is a morphism in $\sD$.

A morphism in $\CCoker (F)_1$ from $(x,y,f,n)$ to $(z,v,g,m)$ is
given by a triple $(a, b, \alpha)$, where $a:x\to z$ and $b:y\to v$
are morphisms in $\sD$, and $\alpha : n\to m$ is a morphism in $\sC$,
such that the following diagram commutes
\[
\xymatrix@R=1.4pc@C=1.4pc{
x \ar[r]^-{f} \ar[d]_a & y\oplus F(n) \ar[d]^{b\oplus F(\alpha)}\\
z \ar[r]_-{g} & v\oplus F(m)}
\]
Composition of morphisms in the category $\CCoker(F)_1$ is given by
composition componentwise. We follow the notation from
\cite[Def. 2.1]{Shulman2010} to define the rest of the structure of
the double category.

The unit functor $U$ is defined as
\begin{align*}
 U:\CCoker(F)_0 &\longrightarrow \CCoker(F)_1\\
x &\longmapsto (x,x,\widehat{1}_x, I_{\sC})\\
x\fto{a} z &\longmapsto (a, a, 1 _{I_{\sC}}),
\end{align*}
where $\widehat{1}_x$ is
given by the composition $x\to x\oplus I_{\sD} \to x\oplus F(I_{\sC})$.
The functors for source and target, $S$ and $T$, are given by:
\begin{align*}
 S,T:\CCoker(F)_1 &\longrightarrow \CCoker(F)_0\\
(x,y,f,n) &\longmapsto x, y\\
(a,b,\alpha) &\longmapsto a, b.
\end{align*}
Finally, the composition functor is given by
\begin{align*}
 \odot:\CCoker(F)_1 \times _{\CCoker(F)_0} \CCoker(F)_1 &\longrightarrow \CCoker(F)_1\\
[(x,y,f,n),(y,z,g,m)] &\longmapsto (x,z,f\bullet g, m\oplus n)\\
[(a,b,\alpha),(b,c,\beta)] &\longmapsto (a,c, \beta \oplus \alpha),
\end{align*}
where $f\bullet g$ denotes the composition
\[
 x \fto{f} y\oplus F(z) \fto{g\oplus 1} (z\oplus F(m))\oplus F(n)\to z\oplus F(m\oplus n).
\]
The associativity and unit constraints come from those in the monoidal
structure of $\sC$.

\begin{prop}
  The double category $\CCoker(F)$ is symmetric monoidal.
\end{prop}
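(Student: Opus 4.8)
The plan is to invoke the notion of symmetric monoidal double category from \cite{Shulman2010}: it suffices to equip the two categories $\CCoker(F)_0$ and $\CCoker(F)_1$ with symmetric monoidal structures for which the source, target, unit, and horizontal composition functors $S, T, U, \odot$ are all symmetric monoidal, and for which the associativity and unit constraints of the double category are monoidal natural transformations. Since every piece of $\CCoker(F)$ is assembled from the Picard groupoids $\sC$ and $\sD$ and the symmetric monoidal functor $F$, the entire monoidal structure can be defined componentwise from the symmetric monoidal structures already present on $\sC$ and $\sD$, and the verifications reduce to coherence in $\sC$ and $\sD$.

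On $\CCoker(F)_0 = \sD$ I take the given structure. The one genuine definition is on $\CCoker(F)_1$. For objects $(x,y,f,n)$ and $(x',y',f',n')$, set their tensor to be $(x \oplus x',\, y \oplus y',\, \tilde f,\, n \oplus n')$, where $\tilde f$ is the composite
\[
x \oplus x' \fto{f \oplus f'} (y \oplus F(n)) \oplus (y' \oplus F(n')) \to (y \oplus y') \oplus (F(n) \oplus F(n')) \fto{1 \oplus F_2} (y \oplus y') \oplus F(n \oplus n'),
\]
in which the middle arrow is the associativity-and-symmetry isomorphism of $\sD$ interchanging the inner factors and $F_2 \cn F(n) \oplus F(n') \to F(n \oplus n')$ is the monoidal constraint of $F$. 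The tensor of two squares $(a,b,\alpha)$ and $(a',b',\alpha')$ is the componentwise triple $(a \oplus a',\, b \oplus b',\, \alpha \oplus \alpha')$; that this is again a square---i.e.\ that the required naturality diagram commutes---follows from naturality of the symmetry of $\sD$ and of $F_2$. The unit object of $\CCoker(F)_1$ is $U(I_\sD)$, and the associativity, unit, and symmetry constraints are induced by those of $\sC$ and $\sD$; for instance the symmetry square on the pair above has $\sD$-components the symmetries $c_{x,x'}$ and $c_{y,y'}$ and $\sC$-component the symmetry $c_{n,n'}$.

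With these definitions, $S$ and $T$ are strict symmetric monoidal because they project onto $\sD$-components, and $U$ is symmetric monoidal with structure isomorphism induced by the unit constraints of $\sC$ and $\sD$ together with $F_0 \cn I_\sD \to F(I_\sC)$. The bulk of the work, and the step I expect to be the main obstacle, is checking that the horizontal composition $\odot$ is symmetric monoidal: one must verify that tensoring two composable pairs and then composing agrees, up to the constraint of $\sC$ reassociating the $\sC$-components, with composing and then tensoring. Unwinding the definition of $f \bullet g$, this is a diagram in $\sD$ built entirely from instances of associativity, symmetry, and the constraints $F_2$ of $F$; by the coherence theorem for symmetric monoidal categories applied to $\sD$ (and functoriality of $F$), any two such pasting composites of coherence isomorphisms with the same source and target coincide, so the compatibility holds automatically. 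The remaining conditions---that the double-category associativity and unit constraints are monoidal and that the symmetry is a transformation of double categories---reduce in exactly the same way to coherence in $\sC$ and $\sD$, leaving only careful bookkeeping.
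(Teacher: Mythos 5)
Your proposal is correct and follows essentially the same route as the paper: you endow $\CCoker(F)_1$ with the componentwise tensor (your $\tilde f$ is exactly the paper's $f\star g$), inherit all constraints from $\sC$ and $\sD$, identify the interchange constraint between $\oplus$ and $\odot$ as the reassociation isomorphism of the $\sC$-components, and dispatch the axioms by coherence for symmetric monoidal categories, just as the paper does.
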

\begin{proof}
  The category $\CCoker(F)_0=\sD$ is symmetric monoidal. We now give a
  symmetric monoidal structure to $\CCoker(F)_1$. On objects it is
  given by:
  \[
  (x,y,f,n)\oplus (z,v,g,m)=(x\oplus z,y\oplus v, f\star g, n\oplus m),
  \]
  where $f\star g$ is the composition
  \[
  x\oplus z \fto{f\oplus g} (y\oplus F(n))\oplus (v\oplus F(m))
  \to
  (y\oplus v)\oplus (F(n)\oplus F(m))\to (y\oplus v)\oplus F(n\oplus m).
  \]
  On morphisms it is defined by applying the sum componentwise. The
  associativity, unit, and symmetry constraints are inherited from
  those in $\sC$ and $\sD$.

  The globular isomorphism
  \[\xymatrix{
    ((x,y,f,n)\oplus (y,z,g,m))\odot ((x',y',f',n')\oplus (y',z',g',m')) \ar[d]^{\mathfrak{x}}\\
    ((x,y,f,n)\odot (x',y',f',n'))\oplus ((y,z,g,m)\odot (y',z',g',m'))
  }
  \]
  is given by the structural isomorphism in $\sC$
  \[
  (m\oplus m')\oplus (n\oplus n')\to (m\oplus n)\oplus (m'\oplus n').
  \]
  The globular morphism $\mathfrak{u}:U_{x\oplus y} \to U_x\oplus
  U_y$ is given by the morphism $I\to I\oplus I$ in $\sC$.

  It is clear that all the necessary diagrams commute since they all
  involve compositions of morphisms of the symmetric monoidal
  structures on $\sC$ and $\sD$.
\end{proof}

We recall that a double category is \emph{fibrant} in the terminology
of \cite{Shulman2010} if every vertical 1-morphism has a companion and
a conjoint. These are horizontal 1-morphisms that allow transport of
vertical structure to horizontal structure \cite[\S3]{Shulman2010}:
\begin{defn}
  Let $a:x\to z$ be a morphism in $\sD$. A \emph{companion} for $a$ is given
  by $(x,z,\hat{a},I_{\sC})$, where $\hat{a}$ is the composite
  \[
  x \fto{a} z \to z\oplus I_{\sD} \to z \oplus F(I_{\sC}).
  \]
  The following diagrams commute and therefore the
  equations of \cite[3.1]{Shulman2010} are trivially satisfied.
  \[
  \xymatrix@R=1.4pc@C=1.4pc{
    x \ar[r]^-{\hat{a}} \ar[d]_-a 
    & z\oplus F(I_{\sC}) \ar@{=}[d] \\
    z \ar[r]_-{\widehat{1}_z} & z\oplus F(I_{\sC})}
  \qquad \qquad
  \xymatrix@R=1.4pc@C=1.4pc{
    x \ar[r]^-{\widehat{1}_x} \ar@{=}[d]
    & x \oplus F(I_{\sC}) \ar[d]^-{a}\\
    x \ar[r]^-{\hat{a}}
    & z\oplus F(I_{\sC})}
  \]  
  A \emph{conjoint} for $a$ is given by $(z,x,\check{a},I_{\sC})$, where
  $\check{a}$ is the composite
  \[
  z \fto{a^{-1}} x \to x\oplus I_{\sD} \to x\oplus F(I_{\sD}).
  \]
  This is the companion of $a$ in the double category obtained from
  $\CCoker(F)$ by taking the same category of objects and the opposite
  category of morphisms.
\end{defn}


\begin{proof}[of Proposition~\ref {prop:coker-symm-mon}.]
  The horizontal bicategory $\sH(\CCoker(F))$ is precisely
  $\Coker(F)$.  Since every morphism in $\sD$ has a companion and a
  conjoint, $\CCoker(F)$ is a fibrant double category.  Therefore by
  \cite[Thm 5.1]{Shulman2010} $\Coker(F)$ is symmetric monoidal.
\end{proof}
\begin{rmk}
  Vitale \cite{vitale2002pbe} points out that $\Coker(F)$ is a
  bigroupoid.  We note, moreover, that the objects are weakly invertible
  since they are the objects of $\sD$ with the same monoidal
  structure. Thus $\Coker(F)$ is what one might call a \emph{Picard
    bigroupoid}.
\end{rmk}

\begin{prop}\label{prop:PF-symm-mon}
  The pseudofunctor $C_F:\sD\to \Coker(F)$ is symmetric monoidal.
\end{prop}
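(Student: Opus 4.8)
The plan is to recognize $C_F$ as the canonical companion-inclusion pseudofunctor associated to the fibrant double category $\CCoker(F)$, and then to exhibit its symmetric monoidal structure as a direct consequence of the symmetric monoidal structure on $\CCoker(F)$. Concretely, the category of objects of $\CCoker(F)$ is $\sD$, and the companion of a vertical morphism $a \cn x \to z$ is precisely the horizontal 1-cell $\widehat{a} = (x, z, \hat{a}, I_\sC)$ that defines $C_F(a)$. Thus $C_F$ is exactly the assignment sending each object and each morphism of $\sD$ to itself and its companion in the horizontal bicategory $\sH(\CCoker(F)) = \Coker(F)$, and the question becomes whether this companion inclusion is symmetric monoidal.

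First I would record the monoidal comparison data. Since $C_F$ is the identity on objects and the monoidal product of $\Coker(F)$ agrees on objects with that of $\sD$ (both being $\oplus$), the comparison 1-cells $C_F(x) \oplus C_F(y) \to C_F(x \oplus y)$ and the unit comparison $I \to C_F(I_\sD)$ may be taken to be identities. The remaining structure --- the pseudofunctoriality 2-cells $C_F(g) \circ C_F(f) \iso C_F(g \circ f)$ and the globular comparison cells for the monoidal product --- are supplied by the companion structure of the double category together with the globular isomorphisms $\mathfrak{x}$ and $\mathfrak{u}$ recorded in the proof that $\CCoker(F)$ is symmetric monoidal. All of these are built from the associativity, unit, and symmetry constraints of $\sC$ and $\sD$.

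Next I would verify the coherence axioms for a symmetric monoidal pseudofunctor. The point is that each such axiom, after unwinding the definitions above, becomes a diagram whose edges are instances of the structural isomorphisms of $\sC$ and $\sD$, and these commute by the coherence theorem for the symmetric monoidal categories $\sC$ and $\sD$. In particular, compatibility with the braidings reduces to the observation that the braiding on $\Coker(F)$ restricts, on objects, to the symmetry of $\sD$, which $C_F$ preserves on the nose. The cleanest route is to obtain this uniformly: the companion inclusion of a symmetric monoidal fibrant double category into its horizontal bicategory is automatically symmetric monoidal, so the conclusion follows from \cite[Thm 5.1]{Shulman2010} together with the two preceding propositions establishing that $\CCoker(F)$ is symmetric monoidal and fibrant.

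I expect the main obstacle to be bookkeeping rather than mathematics: a symmetric monoidal pseudofunctor carries a substantial amount of coherence data (the invertible modifications governing associativity and units for the comparison transformations, together with their axioms and the braiding compatibility), and matching each piece against the companion and globular data of $\CCoker(F)$ is tedious. However, because $C_F$ is the identity on objects with identity comparison 1-cells, every coherence cell lives in an endomorphism category and is assembled from the constraints of $\sC$ and $\sD$, so no genuinely new identity must be checked beyond those already guaranteed by the symmetric monoidal structure on the double category.
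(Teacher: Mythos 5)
Your proposal is correct and follows essentially the same route as the paper: the comparison 1-cells are taken to be identity 1-cells ($\widehat{1}_{x\oplus y}$ and $\widehat{1}_{I}$), the remaining 2-cells and modifications are the unique structural isomorphisms between products of copies of $I_{\sC}$, and all the axioms follow from coherence for the symmetric monoidal structures on $\sC$ and $\sD$. The only caveat is that your ``cleanest route'' overstates what \cite[Thm 5.1]{Shulman2010} provides---that theorem yields the symmetric monoidal structure on $\sH(\CCoker(F))$ but does not, as stated there, assert that the companion inclusion is a symmetric monoidal pseudofunctor---so it is the direct verification in your first two paragraphs that actually carries the proof, exactly as in the paper.
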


\begin{proof}
  We need to specify transformations
  \[
  (\chi_{x,y},\chi _{f,g}): C_F(x)\oplus C_F(y) \to C_F(x\oplus y)
  \]
  and
  \[
  \iota :I\to C_F(I).
  \]

  We let $\chi _{x,y}:x\oplus y \to x\oplus y$ be the identity
  1-cell in $\Coker(F)$, that is, $\widehat{1}_{x\oplus y}$. The
  2-cell
  \[
  \chi _{f,g}: \widehat{f\oplus g} \circ \widehat{1}_{x\oplus
    y}\Rightarrow \widehat{1}_{x'\oplus y'}\circ (\widehat{f}\oplus
  \widehat{g})
  \]
  is given by the unique structural morphism in $\sC$, $I\oplus I \to
  I\oplus (I\oplus I)$. It is an easy verification that this is a
  valid 2-cell in $\Coker(F)$, and that these data forms a
  transformation.

  Similarly, we let $\iota : I\to I$ be the identity 1-cell. The rest
  of the data for a symmetric monoidal pseudofunctor consists of four
  modifications, which are collections of 2-cells. In the four cases,
  the source and target of the modifications have products of copies
  of the unit $I\in \sC$ as their second component, and hence the
  modifications are given by the unique structural morphism connecting
  these two products in $\sC$. The coherence of the symmetric monoidal
  structure on $\sC$ therefore implies that these modifications satisfy all of
  the necessary equations.
\end{proof}



\providecommand{\bysame}{\leavevmode\hbox to3em{\hrulefill}\thinspace}
\providecommand{\MR}{\relax\ifhmode\unskip\space\fi MR }
\providecommand{\MRhref}[2]{%
  \href{http://www.ams.org/mathscinet-getitem?mr=#1}{#2}
}
\providecommand{\href}[2]{#2}

\end{document}